\newcommand{\eps}{{\varepsilon}}
\newcommand{\R}{{\mathbb R}}
\newcommand{\sph}{{\mathbb S}}
\newcommand{\meas}{{\mathcal M}}
\newcommand{\B}{{\mathcal B}}
\newcommand{\W}{{\mathcal W}}
\newcommand{\les}{\lesssim}
\newcommand{\gtr}{\gtrsim}
\newcommand{\Kato}{{\mathcal K}}
\newcommand{\txt}{\textstyle}
\newcommand{\Lap}{\Delta}
\newcommand{\Res}{R_0^+}
\newcommand{\Rplus}{R^+}
\newcommand{\Rminus}{R^-}
\newcommand{\vp}{\varphi}
\newcommand{\la}{\langle}
\newcommand{\ra}{\rangle}
\newcommand{\1}{{\mathbf 1}}
\def\norm[#1][#2]{\|#1\|_{#2}}
\def\bignorm[#1][#2]{\big\|#1\big\|_{#2}}
\def\Bignorm[#1][#2]{\Big\|#1\Big\|_{#2}}
\def\japanese[#1]{\langle #1 \rangle}
\def\Im[#1]{{\rm Im}(#1)}
\def\Re[#1]{{\rm Re}(#1)}
\newtheorem{theorem}{Theorem}
\newtheorem{lemma}[theorem]{Lemma}
\newtheorem{corollary}[theorem]{Corollary}
\newtheorem{proposition}[theorem]{Proposition}
\theoremstyle{remark}
\newtheorem{remark}{Remark}
\begin{document}

\title[Dispersive Estimates for Measure-Valued Potentials]
{Dispersive Estimates for Schr\"odinger Operators with Measure-Valued
Potentials in $\R^3$}

\date{August 3, 2011}

\author{Michael\ Goldberg}
\thanks{This work is supported in part by NSF grant DMS-1002515.}
\address{Department of Mathematics, University of Cincinnati,
Cincinnati, OH 45221-0025}
\email{Michael.Goldberg@uc.edu}

\maketitle

\begin{abstract}
We prove dispersive estimates for the linear Schr\"odinger evolution associated
to an operator $-\Delta + V$ in $\R^3$, where the potential is a signed measure
with fractal dimension at least 3/2.
\end{abstract}

\section{Introduction}

The dispersive properties of the free Schr\"odinger semigroup $e^{it\Delta}$
as a map between $L^p(\R^n)$ and its dual space are well understood,
thanks to Plancherel's identity (or more generally the Spectral Theorem) 
in the case $p=2$, and Fourier inversion in the case $p=1$.  On one endpoint,
the $L^2$ conservation laws extend readily to any self-adjoint perturbation
$H = -\Delta + V$ taking the place of $-\Delta$ as the infinitesimal generator.
Our goal in this paper is to establish a corresponding $L^1 \mapsto L^\infty$
estimate in three dimensions for a class of short-range potentials $V(x)$ that
include measures as admissible local singularities.

Measure-valued potentials are quite common in one dimension; the operator
$-\frac{d^2}{dx^2} + c\delta_0$ is often the subject of exercises in a first
quantum mechanics course.  In higher dimensions there are several plausible
generalizations of this example.  Dispersive estimates are known in the case
where $V(x)$ consists of a finite collection of point masses in
$\R^3$~\cite{DaPiTe06}.  In these results the spaces $L^1$ and $L^\infty$ are
modified by a set of local weights because the domain of the associated
Schr\"odinger operator consists of functions that vanish at each point mass. 
Here we preserve the idea of the potential describing an
infinitesimally thin barrier and
show that dispersive estimates are valid in unweighted $L^p(\R^3)$
when $V(x)$ is supported on a compact two-dimensional surface
$\Sigma \subset \R^3$.  In fact we will consider all
compactly supported fractal measures of sufficiently high dimension.
For many purposes the threshold dimension is 1 (in $\R^n$ it would be $n-2$)
so that multiplication by $V$ is compact relative to the Laplacian.  
We are forced to increase the threshold dimension to 3/2 in the
proof of the Schr\"odinger dispersive estimate in order to use the best available
Fourier restriction theorems.  

In this paper, a compactly supported signed measure $\mu$ is called
{\em $\alpha$-dimensional} if it satisfies
\begin{equation} \label{eq:dimension}
|\mu|(B(x,r)) \leq C_\mu r^{\alpha} \ \ {\rm for\ all}\ r>0\ {\rm and}\ 
x \in \R^3
\end{equation}
Nontrivial $\alpha$-dimensional measures exist for any $\alpha \in [0,3]$.
We also characterize potentials in terms of the {\em global Kato norm}, defined
on signed measures in $\R^3$ by the quantity
\begin{equation} \label{eq:Kato}
\norm[\mu][\Kato] = \sup_{y\in\R^3} \int_{\R^3} \frac{|\mu|(dx)}{|x-y|}
\end{equation}

Every element with finite global Kato norm is a 1-dimensional measure with
$C_\mu \leq \norm[\mu][\Kato]$, by comparing $|x-y|^{-1}$ to the characteristic
function of a ball.  The converse is not quite true, however the Kato class contains
all compactly supported measures of dimension $\alpha > 1$.  We will examine
this relationship more carefully in Proposition~\ref{prop:dimensionKato}.

\begin{remark}
Kato's work~\cite{Ka72} is more closely associated with the local uniform
integrability condition~\eqref{eq:localKato}; the first true norm of this type
(integrating over $|x-y| < 1$ only) is due to Schechter~\cite{Sc71}.
We follow the naming convention in
Rodnianski-Schlag~\cite{RoSc04} where the global Kato norm is applied to
dispersive estimates in $\R^3$.
\end{remark}

For the free Schr\"odinger equation in $\R^n$, the standard dispersive bound is
\begin{equation} \label{eq:freedispersive}
\norm[e^{it\Lap}f][\infty] \le (4\pi|t|)^{-n/2}\norm[f][1]
\end{equation}
In three dimensions this inequality is stable under small perturbations of the
Laplacian.  Once the negative part of $V$ is sufficiently large
(e.g. $\norm[V_-][\Kato] > 4\pi$) it becomes possible for $H = -\Delta + V$
to acquire one or more bound states that evolve in place without time-decay
according to a law $e^{-itH}\psi_j = e^{-it\lambda_j}\psi_j$.
We wish to show that with the exception of bound
states, the Schr\"odinger propagator of $H$ still satisfies an
estimate of the form~\eqref{eq:freedispersive}.

Our main result imposes an additional spectral assumption that all eigenvalues
of $H$ be strictly negative, and that there is no resonance at zero.  In this
context a resonance occurs when the equation
\begin{equation*}
\psi + (-\Delta - (\lambda \pm i0))^{-1}V\psi = 0
\end{equation*}
has nontrivial solutions belonging to the weighted space
$\japanese[x]^{s}L^2(\R^3)$ for each $s > \frac12$ but not to $L^2$ itself.
Such functions also solve $H\psi = \lambda\psi$, however the lack of
square-integrability gives resonances different spectral properties from a true
eigenvalue.  Forbidding
eigenvalues and resonances at zero is a common practice, as it is known
that the continuous part of the linear Schr\"odinger evolution may have
leading-order decay of $|t|^{-1/2}$ if zero is not a regular point of the
spectrum~\cite{ErSc04},~\cite{Ya05}.  The necessity of a spectral assumption
over the interval $\lambda > 0$ is uncertain but it is included here for the
sake of simplicity.

\begin{theorem} \label{thm:main}
Let $V$ be a compactly supported signed measure on $\R^3$ of dimension
$d > \frac32$.  If the Schr\"odinger operator $-\Delta + V$ has no resonance
at zero and no eigenvalues at any $\lambda \geq 0$, then the dispersive
estimate
\begin{equation} \label{eq:dispersive}
\norm[e^{-it(-\Delta+V)}P_{ac}f][\infty] \les |t|^{-3/2}\norm[f][1]
\end{equation}
holds for every $f \in L^1(\R^3)$.  The symbol $P_{ac}$ denotes projection
onto the continuous spectrum of $-\Delta + V$.

The dispersive estimate is also valid if, for a fixed $d > \frac32$, $V$ can be
expressed as the Kato-norm limit of compactly supported $d$-dimensionsal
measures.
\end{theorem}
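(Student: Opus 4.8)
The plan is the standard resolvent-expansion route to dispersive estimates, with Fourier restriction theory for fractal measures supplying the quantitative high-energy decay that the hypothesis $d>\frac32$ makes available. Write $V=U\,d|V|$ with $U(x)\in\{\pm1\}$, and let $v$ denote ``multiplication by $|V|^{1/2}$'' realized between copies of $L^2(\R^3,d|V|)$, so that $v R_0(\mu^2\pm i0)v$ has kernel $\frac{e^{\pm i\mu|x-y|}}{4\pi|x-y|}$ and is bounded on $L^2(d|V|)$ by Schur's test, since $\norm[V][\Kato]<\infty$ by Proposition~\ref{prop:dimensionKato}. The symmetric resolvent identity gives
\begin{equation*}
R_V(\mu^2\pm i0)=R_0(\mu^2\pm i0)-R_0(\mu^2\pm i0)\,v\,M_\pm(\mu)^{-1}\,v\,R_0(\mu^2\pm i0),\qquad M_\pm(\mu)=U+v R_0(\mu^2\pm i0)v,
\end{equation*}
and Stone's formula writes $e^{-it(-\Delta+V)}P_{ac}f=\frac{1}{\pi i}\int_0^\infty e^{-it\mu^2}\mu\big[R_V(\mu^2+i0)-R_V(\mu^2-i0)\big]f\,d\mu$. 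The leading term reproduces \eqref{eq:freedispersive}, so it remains only to estimate the contribution of the $v\,M_\pm^{-1}\,v$ factor.

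First I would show that $M_\pm(\mu)$ is boundedly invertible on $L^2(d|V|)$ uniformly in $\mu\in[0,\infty)$. Because $d>1$ and $V$ is compactly supported, $v R_0(\mu^2)v$ is compact on $L^2(d|V|)$, so $M_\pm(\mu)$ is Fredholm of index zero and invertibility reduces to triviality of the kernel: a nonzero $\phi\in\ker M_\pm(\mu)$ at $\mu>0$ produces, via $\psi=-R_0(\mu^2\pm i0)v\phi$, a solution of $(-\Delta+V)\psi=\mu^2\psi$ --- an embedded eigenvalue, since Rellich's uniqueness theorem precludes positive-energy resonances for compactly supported $V$ --- while at $\mu=0$ it produces a zero-energy eigenfunction or resonance; each is excluded by hypothesis. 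Pointwise invertibility together with norm-continuity of $\mu\mapsto M_\pm(\mu)$ on $[0,\infty)$ and the decay $\norm[v R_0(\mu^2)v][L^2(d|V|)\to L^2(d|V|)]\to0$ as $\mu\to\infty$ then gives $\sup_{\mu\ge0}\|M_\pm(\mu)^{-1}\|<\infty$, and differentiating $M_\pm(\mu)^{-1}=-M_\pm(\mu)^{-1}(\partial_\mu M_\pm(\mu))M_\pm(\mu)^{-1}$ controls its first $\mu$-derivative as well.

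The crux is the quantitative estimate underlying the previous step and the ensuing oscillatory-integral bound. Since the kernel of the imaginary part of the free resolvent is $\frac{\sin(\mu|x-y|)}{4\pi|x-y|}=\frac{\mu}{(2\pi)^2}\int_{\sph^2}e^{i\mu\omega\cdot(x-y)}\,d\omega$, the operator $v(\mathrm{Im}\,R_0(\mu^2))v$ is, up to a constant, the restriction of $\widehat{f\,d|V|}$ to the sphere $|\xi|=\mu$ composed with its adjoint, so its norm --- and its norm after one application of $\partial_\mu$, which costs only the bounded factor $|x-y|$ --- is governed by an $L^2$ Fourier restriction estimate for the $d$-dimensional measure $|V|$. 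For $d>\frac32$ the best available restriction theorems for fractal measures supply such a bound with a negative power of $\mu$ at high energy (and mere boundedness, which is all that is needed, at low energy). Feeding this into a Neumann series for $M_\pm^{-1}$ when $\mu$ is large and into the compactness-and-continuity argument when $\mu$ is bounded, the perturbative part of Stone's formula becomes a geometric sum over $k\ge0$ of integrals of the form
\begin{equation*}
\int_0^\infty e^{-it\mu^2}\,\mu\,e^{i\mu(|x-y_0|+|y_0-y_1|+\dots+|y_{k-1}-y_k|+|y_k-y|)}\,a_k(\mu)\,d\mu,
\end{equation*}
in which $\norm[a_k][\infty]+\norm[a_k'][1]$ is bounded by a product of restriction norms and factors $\sup_\mu\|M_\pm(\mu)^{-1}\|$. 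A one-dimensional stationary-phase lemma exploiting both the Gaussian $e^{-it\mu^2}$ and the extra factor $\mu\,d\mu$ bounds each such integral by $C|t|^{-3/2}\big(\norm[a_k][\infty]+\norm[a_k'][1]\big)$ uniformly in the spatial variables; integrating the $y_j$ against $d|V|$ --- whose kernels $|y_{j-1}-y_j|^{-1}$ are summable by finiteness of $\norm[V][\Kato]$ --- and summing the geometric series gives the bound $\les|t|^{-3/2}$ uniformly in $x$, which is \eqref{eq:dispersive}. I expect the principal obstacle to be exactly this assembly: obtaining the oscillatory-integral bound with a constant that is simultaneously uniform in all spatial variables, integrable against $d|V|^{\otimes(k+1)}$, geometric in $k$, and --- most delicately --- does not spend more powers of $\mu$ than the restriction estimate for an arbitrary $d$-dimensional measure can pay; the threshold $d=\frac32$ is precisely where these powers balance.

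For the limiting statement, suppose $V_n\to V$ in $\norm[\cdot][\Kato]$ with each $V_n$ a compactly supported $d$-dimensional measure. One checks that the analysis above yields \eqref{eq:dispersive} for each $d$-dimensional $V_n$ with an implicit constant depending on $V_n$ only through $\norm[V_n][\Kato]$, $d$, and $\sup_{\mu\ge0}\|M_{n,\pm}(\mu)^{-1}\|$. Since $\norm[\cdot][\Kato]$ controls the operators $v_n R_0(\mu^2)v_n$, hence $M_{n,\pm}(\mu)$, continuously, the spectral hypotheses on $V$ --- equivalent, as above, to uniform invertibility of $M_\pm(\mu)$ --- persist for $V_n$ once $n$ is large, with $\sup_n\sup_{\mu\ge0}\|M_{n,\pm}(\mu)^{-1}\|<\infty$; so the constant in \eqref{eq:dispersive} is bounded uniformly in $n$. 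Finally $\norm[V_n-V][\Kato]\to0$ gives norm-resolvent convergence $H_n\to H$ and convergence of the associated spectral projections, so $e^{-itH_n}P_{ac}(H_n)f\to e^{-itH}P_{ac}(H)f$ in $L^2$ for every $f\in L^1\cap L^2$; passing to the limit (along a subsequence converging almost everywhere) in the uniform bound $\norm[e^{-itH_n}P_{ac}(H_n)f][\infty]\le C|t|^{-3/2}\norm[f][1]$ and then using the density of $L^1\cap L^2$ in $L^1(\R^3)$ completes the proof.
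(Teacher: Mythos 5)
Your overall scaffolding (Stone's formula, resolvent identities, compactness and absence of embedded eigenvalues/resonances for the pointwise invertibility, Erdogan-type restriction estimates for the high-energy decay) matches the paper's, but the step where you actually produce the $|t|^{-3/2}$ bound has a genuine gap. You propose to write the perturbative part of Stone's formula as ``a geometric sum over $k\ge 0$'' of explicit oscillatory integrals and to estimate each term by stationary phase. That representation is only available where the Neumann/Born series for $M_\pm(\mu)^{-1}$ converges --- i.e.\ at high energy (where the paper shows $\|(VR_0^+(\lambda^2))^2\|\to 0$), or for small potentials. For a general potential at low and intermediate energies, the ``compactness-and-continuity argument'' yields only $\sup_{\mu}\|M_\pm(\mu)^{-1}\|<\infty$ as an operator bound on $L^2(d|V|)$ for each fixed $\mu$, and that pointwise-in-$\mu$ information does not by itself convert into an $L^1\to L^\infty$ dispersive estimate: one needs the inverse to be the Fourier transform (in $\mu$) of an object with an $L^1$/total-variation structure in the dual variable. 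Your proposal supplies no mechanism for this, and you even flag the resulting summability problem (constants ``geometric in $k$'' and not overspending powers of $\mu$) as ``the principal obstacle'' without resolving it. This is precisely the point at which the paper departs from the Born-series route: it verifies the hypotheses of an operator-valued Wiener $L^1$ inversion theorem in the algebra $\W_\meas$ of maps $\meas\to L^1(\R;\meas)$ (translation continuity of a power $T^N$, decay of $\chi_{|\rho|\ge R}T$, and pointwise invertibility of $I+\hat T(\lambda)$), concluding that $(\1+T)^{-1}f$ is a finite measure on $\R^{1+3}$ with total variation $\lesssim\|f\|_1$; Parseval against the factor $e^{-it\lambda^2}\frac{d}{d\lambda}R_0^+(\lambda^2)$ then yields $|t|^{-3/2}$ without ever expanding a series at low energy.

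A secondary issue: your stationary-phase step needs pointwise kernel bounds (integrating $|y_{j-1}-y_j|^{-1}$ against $d|V|$ via the Kato norm), while your decay in $\mu$ comes from an $L^2(d|V|)$ restriction estimate; these live in different function spaces and you do not explain how to combine them within a single term of the series. The paper reconciles exactly this tension by interpolating between $L^1(V)$ and $L^2(V)$ bounds (Corollary~\ref{cor:Resdecay}) to get decay of $(VR_0^+(\lambda^2))^k$ on $\meas$, but again only uses the result to set parameters in the Wiener theorem, not to sum a series term by term. Your treatment of the Kato-limit case also glosses over the fact that the operators $M_{n,\pm}$ act on the varying spaces $L^2(d|V_n|)$, so ``continuity in $\|\cdot\|_\Kato$'' of their inverses is not a statement about a fixed Banach space; the paper instead runs the low-energy argument directly for $V$ (which inherits the local Kato condition) and uses the approximants only for the high-energy decay and the translation-continuity parameter.
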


\begin{remark}
During the course of the proof we
demonstrate that resonances cannot exist at any $\lambda > 0$
(Lemma~\ref{lem:Agmon}), and that embedded eigenvalues also cannot exist
provided the dimension of $V$ is greater than 2 (Proposition~\ref{prop:absence}).
The uniform resolvent bounds that are central to the argument also suffice to
prove the absence of singular continuous spectrum by applying Theorem~XIII.20
of~\cite{ReSi4}.
\end{remark}

Dispersive estimates with a time decay rate of $|t|^{-3/2}$ were found by
Rauch~\cite{Ra78} and Jensen-Kato~\cite{JeKa79} for initial data belonging
to weighted $L^2(\R^3)$.  The first statement of type~\eqref{eq:dispersive}
was proved by Journ\'e-Soffer-Sogge~\cite{JoSoSo91} for potentials satisfying
both $\hat{V} \in L^1$ and $|V(x)| \les |x|^{-7-\epsilon}$.  Here the effects
of the perturbation are computed directly onto the Schr\"odinger propagator
using Duhamel's formula.  Several authors have since refined the older spectral
methods to reproduce~\eqref{eq:dispersive} with less
restrictive conditions on the potential (\cite{Ya95}, \cite{RoSc04},
\cite{GoSc04a}, \cite{Go06b}, \cite{BeGo11}).  This has been particularly
effective in  three dimensions, thanks to a convenient expression for the
resolvent of the Laplacian as an integral operator.  Progress along these lines
in other dimensions (with time decay $|t|^{-n/2}$ for estimates on $\R^n$)
 can be found in~\cite{Sc05a}, \cite{CaCuVo09}, \cite{ErGr10}, along with
the results in~\cite{JoSoSo91}.

We follow the same procedure as in~\cite{BeGo11}, where the dispersive
estimate is derived from an integrability property of a family of operators
that describes the difference between the free and perturbed spectral measures.
The desired integrability follows in turn from a Wiener $L^1$ inversion
theorem involving Fourier analysis of operator-valued functions on the real line.

There are two main difficulties with extending previous work to the class of
measure-valued potentials.  The first is to verify that that multiplication by $V$
has small form-bound relative to the Laplacian so that one can speak freely
regarding the self-adjointness of $-\Delta + V$ and its essential spectrum.
The second is to identify function spaces on which multiplication by $V$ is well
defined (which excludes any $L^p(\R^3)$) and the resolvent of the Laplacian
has suitable asymptotics.  Most of the analysis takes place in $L^2(V)$ for this
reason.  The embedding $\dot{H}^1(\R^3) \subset L^2(V)$ plays a key role
mediating between the two types of operators and insuring that the end result
is still translation-invariant.

Section~\ref{sec:selfadjoint} addresses the properties of $V$ as a quadratic
form over $\dot{H}^1(\R^3)$ and spells out basic relations between
this Sobolev space and the global Kato norm.  These results are not surprising
but we are unaware of a careful treatment in the literature.  The proof of
Theorem~\ref{thm:main} unfolds over the course of
Section~\ref{sec:dispersive}.  We recall the reduction 
argument and abstract Wiener theorem from~\cite{BeGo11} then show that each one
of its hypotheses are satisfied for the class of potentials under consideration.
The high energy resolvent bounds (Theorem~\ref{thm:resolvent}) may shed light on
other scattering phenomena beyond the scope of the current paper. 

\section{Self-Adjointness} \label{sec:selfadjoint}

For any potential which is not a bounded function of $x$ there are well known
difficulties identifying the domain of $-\Delta + V$ and its adjoint operator.
We can take advantage of the KLMN theorem~\cite[Theorem X.17]{ReSi2}
to produce a unique self-adjoint operator with the correct quadratic form on
$\dot{H}^1(\R^3)$  provided $V$ satisfies the form bound
\begin{equation}\label{eq:formbound}
\Big|\int_{\R^3}|\vp(x)|^2\,dV\Big| \le a\norm[\vp][\dot{H}^1]^2 
  + b\norm[\vp][L^2]^2
\end{equation}
for some $a < 1$.  It will suffice to assume that $V$
satisfies the``local Kato condition"
\begin{equation} \label{eq:localKato}
\lim_{r \to 0^+} \sup_{y\in\R^3} \int_{|x-y|<r} |x-y|^{-1}\,|V|(dx) \ =\ 0.
\end{equation}
Measures that are $\alpha$-dimensional for some $\alpha > 1$ automatically
satisfy~{\eqref{eq:localKato} with an explicit modulus of continuity as $r$ 
approaches zero.

\begin{proposition} \label{prop:dimensionKato}
Suppose $\mu$ is an $\alpha$-dimensional measure, $\alpha > 1$, with support
in the ball $B(0,2^M)$.  Then $\mu \in\Kato$ with the global and local estimates
\begin{align*}
&\norm[\mu][\Kato] \les \frac{C_\mu}{\alpha-1} 2^{(\alpha-1)M} \\
{\it and}\quad &\sup_{y\in\R^3}\int_{|x-y|<r} \frac{|\mu|(dx)}{|x-y|}
\les \frac{C_\mu}{\alpha-1}r^{\alpha-1} \quad{\it for\ all}\ r>0.
\end{align*}
Consequently, if $V$ can be approximated in $\Kato$ by a sequence
of measures $\mu_j$ with dimension $\alpha_j > 1$, then
$V$ satisfies~\eqref{eq:localKato}. 
\end{proposition}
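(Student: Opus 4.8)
The plan is to prove both estimates by a dyadic decomposition around the base point $y$: bound the $|\mu|$-mass of each dyadic spell by the dimension hypothesis~\eqref{eq:dimension} and sum a geometric series whose ratio is $2^{-(\alpha-1)}$. This is routine Frostman-type bookkeeping; the factor $(\alpha-1)^{-1}$ in the statement is simply $\bigl(1-2^{-(\alpha-1)}\bigr)^{-1}$ up to an absolute constant, since $s\mapsto 1-2^{-s}$ is concave with $1-2^{-s}\ge s/2$ on $[0,1]$, while for $\alpha$ bounded away from $1$ the quantity $(\alpha-1)^{-1}$ is itself bounded below (recall \eqref{eq:dimension} forces $\alpha\le 3$ whenever $\mu\neq 0$, by a standard covering argument), so it swallows any constant. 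Before decomposing I note that \eqref{eq:dimension} with $r\to 0^+$ rules out atoms of $|\mu|$, so $|x-y|>0$ holds $|\mu|$-a.e.\ and all integrals below are well defined.

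For the local bound, fix $y$ and $r>0$ and split $\{0<|x-y|<r\}$ into the shells $A_j=\{2^{-j-1}r\le|x-y|<2^{-j}r\}$, $j\ge 0$. On $A_j$ we have $|x-y|^{-1}\le 2^{j+1}r^{-1}$, while \eqref{eq:dimension} gives $|\mu|(A_j)\le|\mu|(B(y,2^{-j}r))\le C_\mu 2^{-j\alpha}r^\alpha$. Multiplying and summing,
\[
\int_{|x-y|<r}\frac{|\mu|(dx)}{|x-y|}\ \le\ 2C_\mu r^{\alpha-1}\sum_{j\ge 0}2^{-j(\alpha-1)}\ =\ \frac{2C_\mu r^{\alpha-1}}{1-2^{-(\alpha-1)}}\ \lesssim\ \frac{C_\mu}{\alpha-1}\,r^{\alpha-1},
\]
and since the bound is independent of $y$, the supremum over $y$ yields the second displayed inequality of the proposition.

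For the global bound, decompose $\R^3\setminus\{y\}$ into the shells $S_j=\{2^j\le|x-y|<2^{j+1}\}$, $j\in\Z$, so that $\int_{\R^3}|x-y|^{-1}\,|\mu|(dx)\le\sum_{j\in\Z}2^{-j}\,|\mu|(B(y,2^{j+1}))$. I bound the shell mass two ways: $|\mu|(B(y,2^{j+1}))\le C_\mu 2^{(j+1)\alpha}$ by \eqref{eq:dimension}, and $|\mu|(B(y,2^{j+1}))\le|\mu|(\R^3)=|\mu|(B(0,2^M))\le C_\mu 2^{M\alpha}$ since $\mu$ is supported in $B(0,2^M)$; hence $|\mu|(B(y,2^{j+1}))\le C_\mu\min\{2^{(j+1)\alpha},2^{M\alpha}\}$, a bound free of $y$. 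Using the first factor for $j\le M$ and the second for $j>M$,
\[
\int_{\R^3}\frac{|\mu|(dx)}{|x-y|}\ \le\ C_\mu 2^\alpha\sum_{j\le M}2^{j(\alpha-1)}+C_\mu 2^{M\alpha}\sum_{j>M}2^{-j}\ \lesssim\ \frac{C_\mu}{\alpha-1}\,2^{(\alpha-1)M},
\]
the first sum being a convergent geometric series because $\alpha>1$. The right side does not involve $y$, so this is $\|\mu\|_{\Kato}\lesssim\frac{C_\mu}{\alpha-1}2^{(\alpha-1)M}$. (For $y$ far from $B(0,2^M)$ the small-$j$ shells simply miss the support, which is why no divergence at $y=\infty$ arises.)

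Finally, for the approximation statement, let $\eps>0$ and choose $j$ with $\|V-\mu_j\|_{\Kato}<\eps/2$. Since $|V|\le|\mu_j|+|V-\mu_j|$ as measures, the definition \eqref{eq:Kato} and the local bound just proved give, for every $r>0$,
\[
\sup_{y}\int_{|x-y|<r}\frac{|V|(dx)}{|x-y|}\ \le\ \sup_{y}\int_{|x-y|<r}\frac{|\mu_j|(dx)}{|x-y|}+\|V-\mu_j\|_{\Kato}\ \lesssim\ \frac{C_{\mu_j}}{\alpha_j-1}\,r^{\alpha_j-1}+\frac{\eps}{2}.
\]
With $j$ now frozen and $\alpha_j>1$, the first term tends to $0$ as $r\to0^+$, so the right side is $<\eps$ for all small $r$; hence $\lim_{r\to0^+}\sup_y\int_{|x-y|<r}|x-y|^{-1}|V|(dx)=0$, which is \eqref{eq:localKato}. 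The only points requiring care are keeping every constant uniform in $y$ --- automatic from the dyadic sums, since shells disjoint from the support contribute nothing --- and the elementary inequality $\bigl(1-2^{-(\alpha-1)}\bigr)^{-1}\lesssim(\alpha-1)^{-1}$ on $1<\alpha\le 3$; I do not expect a genuine obstacle.
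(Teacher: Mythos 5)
Your proof is correct and follows essentially the same route as the paper: a dyadic shell decomposition around $y$, with the ball mass bounded by $C_\mu\min\{2^{j\alpha},2^{M\alpha}\}$ (the paper's $2^{\alpha\max(k,M+2)}$ in disguise), and an $\eps/2$ argument for the approximation claim that is just the paper's uniform-convergence statement spelled out. Your extra care about the constant $(1-2^{-(\alpha-1)})^{-1}\lesssim(\alpha-1)^{-1}$ and the exclusion of atoms is welcome but does not change the argument.
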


\begin{proof}
For each point $y \in B(0,2^{M+1})$, 
\begin{equation} \label{eq:dimensionKato}
\begin{aligned} \int_{\R^3} \frac{|\mu|(dx)}{|x-y|}
&\leq \sum_{k=-\infty}^\infty 2^{-k}|\mu|(B(y,2^k)) \\
&\les C_\mu 2^{(M+2)(\alpha-1)}
 \bigg(\frac{1}{1-2^{(1-\alpha)}} + 1\bigg)
\end{aligned}
\end{equation}
by estimating $|\mu|(B(y,2^k)) \leq C_\mu 2^{\alpha\max(k,M+2)}$.
To integrate over the region of finite radius $r$, the sum in~\eqref{eq:dimensionKato}
is taken over $k \leq \lceil \log r \rceil$ instead.

If $|y| > 2^{M+1}$ then the integral in~\eqref{eq:dimensionKato} is easily
bounded by $2|y|^{-1}|\mu|(B(0,2^M))$ by observing that $|x-y| \sim |y|$
within the  support of $\mu$.

Convergence of  $\mu_j$ in the global Kato norm forces the collection of
functions $\eta_j(r) = \sup_y \int_{|x-y|< r} |x-y|^{-1}\, d|\mu_j|$
to converge uniformly in $r$.
The property $\lim_{r\to 0} \eta_j(r) = 0$
is preserved by uniform convergence.
\end{proof}

The class of functions $V(x)$ (i.e. absolutely continuous measures $V(x)\,dx$) 
defined by property~\eqref{eq:localKato} is considered at length in~\cite{Si82}.
It is suggested there that singular measures satisfying~\eqref{eq:localKato}
may be approximated by a bounded function via convolution with smooth
mollifiers.  While this approach is indeed useful we emphasize that convergence
in the Kato norm generally fails because of the placement of absolute values.
The weak convergence argument that takes its place is detailed below.

Note that~\eqref{eq:localKato} implies that $\Delta^{-1}V$ is a uniformly
continuous function.  Moreover the entire family $\Delta^{-1}(V\omega)$
is equicontinuous, where $\omega$ ranges over the bounded measurable
functions of unit norm.  Both these claims are proved as part of
Lemma~\ref{lem:compactness} in the next section. Then we have the estimates
\begin{align*}
\norm[\tau_z\Delta^{-1}V f][L^{\infty}(V)] &\les
\norm[V][\Kato] \norm[f][L^{\infty}(V)]  \quad {\rm for\ all}\ z \in \R^3\\
\norm[(\1 - \tau_z)\Delta^{-1}V f][L^{\infty}(V)] &\les
o(1)\norm[f][L^{\infty}(V)]  \qquad {\rm as}\ |z| \to 0.
\end{align*}
The vanishing rate $o(1)$ depends on the specific profile of $V$ but is
independent of the choice of $f\in L^\infty(V)$.

By duality, and the fact that translations commute with $\Delta^{-1}$,
the same operator estimates hold for $L^1(V)$ as well.  Applying the Schur
test to the integral kernels of these operators
extends the result to all $L^p(V)$, $1 \le p \le \infty$.
This gives an embedding of $\dot{H}^1(\R^3)$ into $L^2(V)$ because by
a $TT^*$ argument it suffices to show that $(-\Delta)^{-1}V$ is a bounded
map from $L^2(V)$ to itself.

The function space $L^2(V)$ is not preserved by translations, however
the embedded subspace $\dot{H}^1(\R^3) \subset L^2(V)$ is translation invariant.
The fact that the action of translations on this subspace is continuous with
respect to the $L^2(V)$ norm is also verified by a $TT^*$ argument.  Let
$T = ({\bf 1} - \tau_z)|\nabla|^{-1}: L^2(\R^3) \to L^2(V)$.
Then
\begin{align*}
\norm[T][] &= \big(\norm[TT^*V][L^2(V) \to L^2(V)]\big)^{1/2} \\
&= \Big(\bignorm[\big((\1-\tau_z)+(\1-\tau_{-z})\big)\Delta^{-1}V
  ][L^2(V) \to L^2(V)]\Big)^{1/2} \\
&= o(1)
\end{align*}
with the end result that $\norm[f - \tau_z f][L^2(V)] 
\le o(1)\norm[f][\dot{H}^1]$ for all $f \in \dot{H}^1(\R^3)$.

For the purposes of~\eqref{eq:formbound} there is not much distance between
$V$ and its close translates because
\begin{align*}
\int_{\R^3} |\vp(x)|^2\,dV - \int_{\R^3} |\vp(x)|^2\, dV(x-z)
&= \int_{\R^3} \big(|\vp(x)|^2 - |\vp(x+z)|^2\big)\,dV \\
&= \int_{\R^3} \big(\vp(x) - \vp(x+z)\big)\bar{\vp}(x)\,dV \\
&\quad + \int_{\R^3} \vp(x+z)\big(\bar{\vp}(x) - \bar{\vp}(x+z)\big) \,dV \\
\le\ &\norm[(\1 - \tau_{-z})\vp][L^2(V)]\big(\norm[\vp][L^2(V)] + \norm[\tau_{-z}\vp][L^2(V)]\big)\\
\le\ &o(1) \norm[\vp][\dot{H}^1]^2
\end{align*}

Let $V^r(x)$ be the quantity $r^{-3}\int_{B(x,r)}\,dV$, which represents the
``average value" of $V$ over a ball radius $r$.  For fixed $r>0$, $V^r(x)$ is a 
continuous function bounded by $r^{-2}\norm[V][\Kato]$.  Then by splitting
$V = (V - V^r) + V^r$ and averaging the above inequality over all $|z| < r$
we see that
\begin{align*}
\Big|\int_{\R^3}|\vp(x)|^2\,dV\Big| &\le o(1)\norm[\vp][\dot{H}^1]^2 
+ \Big|\int_{\R^3} |\vp(x)|^2\,dV^r\Big| \\
&\le o(1)\norm[\vp][\dot{H}^1]^2 
  + r^{-2}\norm[V][\Kato]\norm[\vp][L^2]^2
\end{align*}
so the coefficient on $\norm[\vp][\dot{H}^1]$ becomes smaller than 1
provided $r$ is sufficiently close to zero.

\section{The Dispersive Estimate} \label{sec:dispersive}

The proof of the dispersive estimate for the Schr\"odinger operator
$-\Delta + V$ follows the same road-map and technical machinery as 
in~\cite{BeGo11}.  First one represents the propagator $e^{it(-\Delta+V)}$
as an integral over the spectral measure, which is expressed in terms of
resolvents via the Stone formula.  Start with the expression
\begin{equation} \label{eq:Stone}
e^{-itH}P_{ac} f = \frac{1}{2\pi i}\int_0^\infty e^{-it\lambda}
[\Rplus_V(\lambda) - \Rminus_V(\lambda)]f \,d\lambda,
\end{equation}
where $R_V^\pm(\lambda) := (H -(\lambda\pm i0))^{-1}$ are the perturbed
resolvents.  The relationship between $R_V^\pm(\lambda)$ and the corresponding
free resolvent $R_0^\pm(\lambda) = (-\Delta - (\lambda \pm i0))^{-1}$ is given
by the multiplicative identities
\begin{equation*}
R_V^\pm(\lambda) = (I + R_0^\pm(\lambda)V)^{-1}R_0^\pm(\lambda)
= R_0^\pm(\lambda)(I + VR_0^\pm(\lambda))^{-1}.
\end{equation*}

When~\eqref{eq:Stone} is subjected to a change of variable
$\lambda \to \lambda^2$ and integration by parts, the end result is
\begin{align*}
e^{-itH}P_{ac}&f
= \frac{1}{\pi i} \int_{-\infty}^\infty e^{-it\lambda^2}\lambda 
\Rplus_V(\lambda^2) f\,d\lambda\\
&= \frac{1}{2\pi t} \int_{-\infty}^\infty e^{-it\lambda^2}
\frac{d}{d\lambda}\Rplus_V(\lambda^2) f\,d\lambda\\
&= \frac{1}{2\pi t}\int_{-\infty}^\infty e^{-it\lambda^2}
\big(I + \Res(\lambda^2)V\big)^{-1}\frac{d}{d\lambda}\Big[\Res(\lambda^2)
\Big] \big(I+V\Res(\lambda^2)\big)^{-1}f\, d\lambda.
\end{align*}
We have made a slight shift in notation here, setting
$\Res(\lambda^2) = (-\Delta - (\lambda + i0)^2)^{-1}$
to account for the fact that $(\lambda + i0)^2 = (\lambda^2 - i0)$ when $\lambda < 0$.

The explicit formula for the free resolvent kernel in three dimensions is
\begin{equation} \label{eq:freeres}
\Rplus_0(\lambda^2)(x,y) = (4\pi|x-y|)^{-1} e^{i\lambda|x-y|}.
\end{equation}
Apply Parseval's identity to the last integral in $\lambda$, taking 
$e^{-it\lambda^2} \frac{d}{d\lambda}\big[\Res(\lambda^2)\big]$ 
to be one of the factors.  It is the Fourier transform of a bounded function
in all variables, with upper bound controlled by $|t|^{-1/2}$.

Our remaining task is to show that $(I + V\Rplus_0(\lambda^2))^{-1}f$ is the
Fourier transform of a measure on $\R^{1+3}$ whose total
variation norm is bounded by $\norm[f][1]$.  This is done by applying an
operator-valued Wiener $L^1$ Inversion Theorem~\cite[Theorem~3]{BeGo11}, 
and taking care to recognize where signed measures occur in lieu of integrable
functions. The dispersive estimate then follows by integration in absolute value.  

To set the background for the Wiener theorem, let $X$ be a Banach space
and $\W_X$ the space of bounded linear maps $T: X \to L^1(\R;X)$ with
associated norm
\begin{equation} \label{eq:W_Xnorm}
\norm[T][\W_X] = \sup_{\norm[f][X] = 1} \int_R \norm[Tf(\rho)][X]\,d\rho.
\end{equation}
This becomes an algebra under the product
\begin{equation} \label{eq:multiplication}
S*Tf(\rho) = \int_\R S\big(Tf(\sigma)\big)(\rho-\sigma)\, d\sigma
\end{equation}
and we use $\overline{\W}_X$ to denote the unital extension of $\W_X$.
\begin{remark}
In the case where there exists a family of bounded linear operators
$S(\rho): X \to X$ satisfying $S(\rho)f = Sf(\rho)$, the product formula
can be restated as a convolution
\begin{equation*}
S*T (\rho)f = \int_\R S(\rho-\sigma)Tf(\sigma)\, d\sigma.
\end{equation*}
More generally the associated ``cross-section''operators $S(\rho)$ may be
unbounded at each $\rho \in \R$. 
One such example with $X = L^1(\R)$ is to fix an integrable function $\eta$
and set $Sf(\rho) = f(\rho)\eta(x)$.
\end{remark}

The Fourier transform of an element $T \in \W_X$ is computed by its action
on test functions in $X$,
\begin{equation*}
\hat{T}(\lambda)f = \int_\R e^{-i\lambda\rho} Tf(\rho)\, d\rho.
\end{equation*}
The family of operators $\hat{T}(\lambda)$ are bounded uniformly by
$\norm[T][W_X]$, continuous in $\lambda$ with respect to the strong operator
topology, and converge strongly to zero as $|\lambda| \to \infty$.
The Fourier transform of the identity element in $\overline{\W}_X$ is
$\hat{\1}(\lambda) = I$.

Products in $\overline{\W}_X$ correspond to pointwise composition of
operators on the Fourier transform side, so an element
$T \in \overline{\W}_X$ cannot be invertible unless $\hat{T}(\lambda)$
is invertible in $\B(X)$ for each $\lambda$.  With two extra assumptions
this pointwise condition is also sufficient.

\begin{theorem}[{\cite[Theorem~3]{BeGo11}}] \label{thm:Wiener}
Suppose $T$ is an element of $\W_X$ satisfying the
properties
\renewcommand{\theenumi}{(C\arabic{enumi})}
\renewcommand{\labelenumi}{\theenumi}
\begin{enumerate}
\item \label{translation} $\lim\limits_{\delta \to 0} 
\norm[T(\rho) - T(\rho-\delta)][\W_X] = 0$.
\item \label{locality}  $\lim\limits_{R \to \infty}
\norm[\chi_{|\rho| \ge R} T][\W_X] = 0$.
\end{enumerate}
If $I + \hat{T}(\lambda)$ is an invertible element of $\B(X)$ for every
$\lambda \in \R$, then ${\mathbf 1} + T$ possesses an inverse in 
$\overline{\W}_X$ of the form ${\mathbf 1} + S$.

In fact it is only necessary for some finite power $T^N \in \W_X$ (using the
definition of products in $\W_X$ given by~\eqref{eq:multiplication}) to satisfy
the translation-continuity condition~{\rm \ref{translation}} rather than $T$ itself.
\end{theorem}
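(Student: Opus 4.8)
The plan is to adapt the classical proof of Wiener's $1/f$ theorem, in its operator-valued (Bochner--Phillips) form, to the noncommutative algebra $\overline{\W}_X$, whose elements are described by their symbols $\hat T(\lambda)$ only together with the structural conditions~\ref{translation}--\ref{locality}. The scheme has three stages: a reduction to an element of $\W_X$ whose kernel in $\rho$ has compact support; a construction of a bounded inverse locally near each point of the one-point compactification of the frequency line; and a patching of these local inverses into a single element of $\overline{\W}_X$. Two bookkeeping devices are useful. For a scalar $\phi$ on $\R$ with $\check\phi\in L^1(\R)$ (say $\phi\in C_c^\infty$, or $\phi=c+\check\psi$ near $\infty$ with $\psi$ Schwartz), let $\Phi$ denote the element of $\overline{\W}_X$ with $\hat\Phi(\lambda)=\phi(\lambda)I$; it is central, multiplication by $\Phi$ is convolution in $\rho$ against $\check\phi$, and $\|\Phi\|_{\overline{\W}_X}\le\|\check\phi\|_{L^1}+|\phi(\infty)|$. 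And for a fixed $B\in\B(X)$ the rule $f\mapsto\big(\rho\mapsto\check\phi(\rho)Bf\big)$ defines an element of $\W_X$ of norm at most $\|\check\phi\|_{L^1}\|B\|$, so ``frozen'' constant operators are harmless as long as they occur flanked by such cutoffs.

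Before localizing I would pin down the behaviour of the symbol. From
\[
\|\hat T(\lambda)-\hat T(\mu)\|_{\B(X)}\le|\lambda-\mu|\,L\,\|T\|_{\W_X}+2\sup_{\|f\|_X\le1}\int_{|\rho|\ge L}\|Tf(\rho)\|_X\,d\rho ,
\]
letting $\mu\to\lambda$ with $L$ fixed and then $L\to\infty$, one obtains norm-continuity of $\lambda\mapsto\hat T(\lambda)$ on $\R$, the tail vanishing by~\ref{locality}. For $|\lambda|$ large, the identity $(1-e^{-i\lambda\delta})\,\widehat{T^N}(\lambda)=\widehat{\,T^N-\tau_\delta T^N\,}(\lambda)$, with $\tau_\delta$ translating in $\rho$, specialized to $\delta=\pi/|\lambda|$ gives $\|\hat T(\lambda)^N\|_{\B(X)}=\|\widehat{T^N}(\lambda)\|_{\B(X)}\le\tfrac12\|T^N-\tau_{\pi/|\lambda|}T^N\|_{\W_X}\to0$ as $|\lambda|\to\infty$, using~\ref{translation} for $T^N$; averaging in $\delta$ the same way shows that a smooth frequency cutoff of $T^N$ to $\{|\lambda|\ge M\}$ has $\W_X$-norm tending to $0$ as $M\to\infty$. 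Since $I+\hat T(\lambda)$ is invertible and $(I+\hat T(\lambda))^{-1}=\big(\sum_{k=0}^{N-1}(-\hat T(\lambda))^k\big)\big(I-(-\hat T(\lambda))^N\big)^{-1}$, the second factor is a convergent Neumann series once $\|\hat T(\lambda)^N\|<1$; combined with continuity on compact sets this yields a finite bound $M_0:=\sup_\lambda\|(I+\hat T(\lambda))^{-1}\|_{\B(X)}$ and, near $\infty$, a local inverse $\1+S_\infty\in\overline{\W}_X$ built from the same telescoping factorization with the (small) high-frequency piece of $T^N$ driving the inner series. This is the only point at which~\ref{translation} enters, and only through $T^N$, which is exactly what the final sentence of the theorem records.

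On the complementary compact frequency set I would first use~\ref{locality} to replace $T$ by $T_R:=\chi_{|\rho|<R}T$: then $\widehat{T_R}(\lambda)$ extends to an entire $\B(X)$-valued function with $\tfrac{d}{d\lambda}\widehat{T_R}(\lambda)=\widehat{(-i\rho T_R)}(\lambda)$ and $\|\rho T_R\|_{\W_X}\le R\|T\|_{\W_X}$, so $\widehat{T_R}(\lambda)-\widehat{T_R}(\lambda_0)$ equals $(\lambda-\lambda_0)$ times the symbol of a $\W_X$ element of norm $\le R\|T\|_{\W_X}$, while $\|T-T_R\|_{\W_X}$ can be made as small as desired. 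Cover the compact region by finitely many intervals $I_j$ of width $h$ about centres $\lambda_j$, subordinate to a smooth partition of unity, and freeze near $\lambda_j$ at $A_j:=\widehat{T_R}(\lambda_j)$ (within $\|T-T_R\|_{\W_X}$ of $\hat T(\lambda_j)$, hence $I+A_j$ invertible with inverse bounded by $2M_0$). Expanding $(I+\widehat{T_R}(\lambda))^{-1}$ in a Neumann series about $I+A_j$ and multiplying by the $j$-th cutoff, each term lifts to an element of $\overline{\W}_X$ because the localized increment $\widehat{T_R}-A_j$ is the symbol of a $\W_X$ element of norm $O(hR\|T\|_{\W_X})$—small for $h$ small—while the frozen factor $(I+A_j)^{-1}$ is absorbed by the cutoffs. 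This produces local inverses $S_j$ of $\1+T_R$ on the $I_j$.

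Finally I would assemble $S_\infty$ and the $S_j$ into one element of $\overline{\W}_X$ by means of the partition of unity; since central scalar multipliers commute with everything and the symbol map on $\overline{\W}_X$ is injective, the assembled element is an inverse of $\1+T_R$ (at worst an inverse up to an error small in $\W_X$), after which $\1+T=(\1+T_R)\big(\1+(\1+T_R)^{-1}(T-T_R)\big)$ is inverted by a last Neumann series in the small perturbation $T-T_R$, and the two one-sided inverses produced from the left and right partitions coincide in the unital algebra. The main obstacle is the quantitative balancing in this last stage: $R$ must be large (so $T-T_R$ is small and the high-frequency factorization valid), $h$ must be small relative to $1/R$ (so the local Neumann series converge in $\W_X$, not merely as symbols), and the number of windows then grows, so one must check that $\|T-T_R\|_{\W_X}$ can still be made small compared with the norm of the assembled parametrix. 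Carrying out this bookkeeping, together with the earlier step of recognizing the frequency-local inverses as honest elements of $\overline{\W}_X$ rather than mere symbols, is precisely where conditions~\ref{translation}--\ref{locality} (and not just pointwise invertibility of $I+\hat T(\lambda)$) are indispensable, and is the technical heart of the proof.
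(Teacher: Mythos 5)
The paper does not actually prove Theorem~\ref{thm:Wiener} — it is imported verbatim from \cite[Theorem~3]{BeGo11} — but your sketch reproduces the constructive argument given there in its essentials: norm-decay of the symbol at high frequency extracted from \ref{translation} applied to $T^N$ via the identity $(1-e^{-i\lambda\delta})\widehat{T^N}(\lambda)=\widehat{(\1-\tau_\delta)T^N}(\lambda)$, truncation in $\rho$ using \ref{locality} to make the symbol Lipschitz on compact frequency sets, frozen-coefficient Neumann series in small frequency windows, and a partition-of-unity patching justified by injectivity of the symbol map. Your outline also correctly isolates exactly the five quantities ($\|T\|_{\W_X}$, $\alpha$, $R$, $N$, $\delta$) that the paper records immediately after the statement as controlling the norm of the inverse, so, modulo the quantitative bookkeeping you flag at the end, this is essentially the same approach as the cited proof.
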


The proof is constructive, and it is important to note that the inverse of
${\mathbf 1} + T$  has norm controlled by the following quantities:
$\norm[T][\W_X]$, $\sup_{\lambda} 
\norm[(I + \hat{T}(\lambda))^{-1}][X\to X]  =: \alpha$,
the value of $R$ for which the norm in~\ref{locality} is smaller
than $1/(K\alpha)$, the exponent $N$, and the value of
$\delta$ for which the norm in~\ref{translation} is smaller than
$1/K$ when applied to $T^N$.  The auxilliary constant $K$ is determined by a
choice of cutoff functions used during the construction.

\begin{proposition} \label{prop:parameters}
Any subset $U \subset \W_X$ for which there is uniform control
over these five parameters will admit a uniform bound 
$\sup_{T \in U}\norm[({\mathbf 1} + T) ^{-1}][\overline{\W}_X] \leq C < \infty$.
\end{proposition}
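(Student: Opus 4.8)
\emph{Proof plan.} The plan is to revisit the constructive proof of Theorem~\ref{thm:Wiener} (i.e.~\cite[Theorem~3]{BeGo11}) and to read off from it an explicit majorant for $\norm[({\mathbf 1}+T)^{-1}][\overline{\W}_X]$ as a function of exactly the five data highlighted after the statement of that theorem: $A_1 := \norm[T][\W_X]$; the constant $\alpha := \sup_\lambda\norm[(I+\hat T(\lambda))^{-1}][X\to X]$; the radius $R$ at which the norm in~\ref{locality} first drops below $1/(K\alpha)$; the exponent $N$; and the shift $\delta$ at which the norm in~\ref{translation}, applied to $T^N$, first drops below $1/K$. Since the auxiliary constant $K$ is fixed once and for all by the choice of cutoff functions in the construction and has no dependence on $T$, this majorant is a single increasing function $\Phi(A_1,\alpha,R,N,\delta)$. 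By hypothesis each of the five data stays in a bounded range as $T$ runs over $U$, so taking $C$ to be $\Phi$ evaluated at the worst case over $U$ will prove the proposition.

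To see that such a $\Phi$ exists, I would trace the construction in~\cite{BeGo11}. It inverts ${\mathbf 1}+T$ by a frequency-localized Neumann series: one fixes a partition of unity on the $\lambda$-axis whose number of pieces and common length scale are dictated by $R$, $N$ and $\delta$ — the relevant equicontinuity input being the elementary bound $\norm[\hat T(\lambda)-\hat T(\mu)][X\to X] \les |\lambda-\mu|\,R\,A_1 + \norm[\chi_{|\rho|\ge R}T][\W_X]$, already controlled by $A_1$ and~\ref{locality} — localizes the operator near each interval, inverts a copy of $I+\hat T(\lambda)$ frozen at that interval's centre (the Neumann series there converging because of the $1/K$ and $1/(K\alpha)$ thresholds, each frozen inverse having norm $\les\alpha$), and then reassembles the local inverses into an element of $\overline{\W}_X$. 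Conditions~\ref{translation} — applied to the power $T^N$, whose $\W_X$-norm is at most $A_1^N$ by submultiplicativity of the product~\eqref{eq:multiplication} — and~\ref{locality} are precisely what furnish the equicontinuity-in-$\rho$ and tail decay needed for the reassembled operator to be a genuine member of $\W_X$ carrying a norm bound of the advertised shape. Every constant that enters is a function of $A_1,\alpha,R,N,\delta$ and the fixed $K$, and of nothing else about $T$.

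The only thing demanding care will be the bookkeeping: one must verify that no step silently depends on a finer feature of $T$ than those five parameters — in particular, that the detour through $T^N$ costs only powers of $A_1$ (immediate from submultiplicativity), that the partition-of-unity and cutoff constants are genuinely $T$-independent, and that the final $\W_X$-norm of the non-identity part of $({\mathbf 1}+T)^{-1}$ is assembled from finitely many such controlled pieces. Granting this, the uniform bounds valid on $U$ propagate through every step and deliver the uniform $C$. This is the main — and essentially the only — difficulty: there is no fresh analysis here, only disciplined tracking of the constants already implicit in~\cite[Theorem~3]{BeGo11}.
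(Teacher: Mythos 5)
Your proposal is correct and follows essentially the same route as the paper, which offers no separate proof of this proposition at all: it is stated as an immediate consequence of the preceding remark that the constructive proof of Theorem~\ref{thm:Wiener} in~\cite{BeGo11} yields an inverse whose norm is controlled by exactly those five quantities. Your plan of tracing the construction and checking that no step depends on a finer feature of $T$ is the intended (and only) argument, and your supporting observations (submultiplicativity giving $\norm[T^N][\W_X]\le\norm[T][\W_X]^N$, the equicontinuity bound for $\hat T$) are sound.
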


For the application to Schr\"odinger dispersive bounds we would like to choose
$X$ to be the space $\meas$ of finite complex Borel measures on $\R^3$ and
$\hat{T}(\lambda) = V\Res(\lambda^2)$.  There is a slight technical obstruction
because the family of operators $T(\rho)$ have a distribution kernel
\begin{equation*}
K(\rho, x, y) = \frac{V(x)}{4\pi|x-y|} \delta_0(\rho+|x-y|).
\end{equation*}
Given a measure $\mu \in \meas$, the image $Tf$ is a measure on $\R^4$ whose total
variation satisfies $\norm[T\mu][] \le (4\pi)^{-1}\norm[V][\Kato] \norm[\mu][\meas]$
but it is not guaranteed to belong to $L^1(\R; \meas)$.  We work instead with
a regularized version of $T$ obtained by cutting off $\hat{T}(\lambda)$ to finite
support.  Let $\eta$ be a standard cutoff function on the line, and define
$T_L$ so that $\hat{T}_L(\lambda) = \eta (\lambda/L)\hat{T}(\lambda) =
\eta(\lambda/L)V\Res(\lambda^2)$.

For each $L > 0$ the integral kernel associated to $T_L$ is given by
\begin{equation} \label{eq:K_L}
K_L(\rho, x, y) = L\frac{V(x)}{4\pi|x-y|} \check{\eta}(L(\rho+|x-y|)).
\end{equation}
Therefore at a fixed value of $\rho$ we have a bound
\begin{equation*}
\int_{\R^3} |T_L\mu(\rho,\,\cdot\,)| \leq \frac{L}{4\pi}\norm[\check{\eta}][\sup]
\iint \frac{|V|(dx) \, |\mu|(dy)}{|x-y|} 
\leq \frac{L}{4\pi} \norm[V][\Kato] \norm[\mu][\meas].
\end{equation*}
Now that $T_L\mu$ is seen to be an $\meas$-valued function, one can also
check its $L^1$ norm by integrating
\begin{equation} \label{eq:T_LinW_M}
\begin{aligned}
\int_\R \norm[T_L\mu(\rho,\,\cdot\,)][\meas]\, d\rho
&\leq \int_{\R^7} d\rho \, L \big|\check{\eta}(L(\rho+|x-y|))\big|
 \frac{|V|(dx)}{4\pi|x-y|} |\mu|(dy) \\
&\leq \frac{\norm[\check{\eta}][1]}{4\pi}
 \int_{\R^6} \frac{|V|(dx)}{|x-y|} |\mu|(dy) \\
&\leq \frac{\norm[\check{\eta}][1]}{4\pi} \norm[V][\Kato] \norm[\mu][\meas].
\end{aligned}
\end{equation}
This demonstrates that each $T_L \in \W_\meas$, with $\norm[T_L][]$
bounded independently of $L$.  In order to prove Theorem~\ref{thm:main}
it suffices to show that
\begin{equation*}
\limsup_{L \to \infty} \norm[({\mathbf 1} + T_L)^{-1}][\overline{\W}_\meas]
 < \infty
\end{equation*}
as this will guarantee that $({\mathbf 1} + T)^{-1}f$ is a finite measure on
$\R^{1+3}$ by taking a distributional limit.  Proposition~\ref{prop:parameters}
provides a clear path for obtaining uniform estimates.

For the majority of the discussion we will assume that $V$ is compactly
supported and has dimension $d > \frac32$.  The list of modifications to
accomodate Kato-norm limits of such potentials is given at the conclusion.

Already there is a uniform bound for $\norm[T_L][\W_\meas]$, the next step is
to determine $\alpha$ by establishing a norm bound for
$(I + \eta(\lambda/L)V\Res(\lambda^2))^{-1}: \meas \to \meas$ 
that is uniform over $\lambda \in \R$, $L \ge L_0$.  There are separate
arguments for low/intermediate and high energy.  The lower-energy estimates
are based on compactness and absence of embedded eigenvalues.
The high-energy analysis is a decay estimate for oscillatory integrals
of the type encountered in Fourier restriction operators.  The technical work
encountered in this step will then make it easy to set values for the remaining
three parameters ($R$, $N$, and $\delta$).

Observe that the family of operators $V\Res(\lambda^2)$ is norm-continuous
with respect to $\lambda$ via the estimate
\begin{equation} \label{eq:continuity}
\begin{aligned}
\norm[V\Res(\lambda_1^2) - V\Res(\lambda_2^2)][\meas \to \meas]
&\leq \norm[V][\meas] \sup_{x,y\in \R^3} \Big|\frac{e^{i \lambda_1|x-y|}
 - e^{i \lambda_2|x-y|}}{4\pi |x-y|} \Big| \\
&\leq\norm[V][\meas] \frac{|\lambda_1-\lambda_2|}{4\pi}.
\end{aligned}
\end{equation}
Then the norm of $(I + V\Res(\lambda^2))^{-1}$ is a continuous function of
$\lambda$, and it is bounded on any finite interval $\lambda \in [-L_0, L_0]$
provided the operator $I + V\Res(\lambda^2)$ is invertible for each $\lambda$.
The Fredholm Alternative argument behind pointwise (in $\lambda$) invertability
is standard, however its details need to be checked in the function spaces
under consideration.

\begin{lemma} \label{lem:compactness}
Suppose $V$ is a compactly supported measure satisfying~\eqref{eq:localKato}.
Then for each $\lambda \in \R$, the operator $V\Res(\lambda^2):\meas\to\meas$
is compact.
\end{lemma}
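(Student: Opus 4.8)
The strategy is to factor $V\Res(\lambda^2)$ through an intermediate space where the compactness is visible, and to exploit the equicontinuity of $\Delta^{-1}(V\omega)$ promised in the text. The key observation is that the kernel $\frac{V(x)}{4\pi|x-y|}e^{i\lambda|x-y|}$ of $V\Res(\lambda^2)$ differs from the kernel $\frac{V(x)}{4\pi|x-y|}$ of $-V\Delta^{-1}$ only by the bounded, Lipschitz factor $e^{i\lambda|x-y|}$; moreover, since $V$ is compactly supported, on the support of $V$ we have $|x-y|$ bounded away from nowhere in a way that does not help, but the oscillatory factor can be uniformly approximated by finite trigonometric sums, so it suffices to prove compactness of the single operator $V\Delta^{-1}: \meas \to \meas$ (the general case then follows because the set of compact operators is closed in operator norm and a product of a compact operator with the bounded multiplication by $e^{i\lambda|x-y|}$-type factors stays compact). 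So I would first reduce, via~\eqref{eq:continuity}-style estimates and a density argument in the oscillatory kernel, to showing that $V\Delta^{-1}$ (equivalently the operator $T(\rho)$ integrated in $\rho$, or simply $\mu \mapsto V \cdot (\Delta^{-1}\mu)$ restricted appropriately) is compact on $\meas$.

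Next I would set up the factorization $V\Delta^{-1} = M_V \circ \Delta^{-1}$ where $\Delta^{-1}$ maps a finite measure $\mu$ to the continuous function $\Delta^{-1}\mu(x) = -\frac{1}{4\pi}\int |x-y|^{-1}\,\mu(dy)$, and $M_V$ denotes "multiply by the value of this continuous function against $V$," producing the measure $(\Delta^{-1}\mu)(x)\,V(dx) \in \meas$. The point is that~\eqref{eq:localKato} together with compact support guarantees (as asserted in the excerpt and proved in the self-adjointness section) that $\{\Delta^{-1}(V\omega): \|\omega\|_\infty \le 1\}$ is equicontinuous and uniformly bounded, hence precompact in $C(K)$ for any compact $K \supset \operatorname{supp} V$ by Arzel\`a--Ascoli. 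Dualizing: if $\mu_n$ is a bounded sequence in $\meas$, I want $V\Delta^{-1}\mu_n$ to have a norm-convergent subsequence in $\meas$. Writing $\|V\Delta^{-1}(\mu_n - \mu_m)\|_{\meas} = \sup_{\|\omega\|_\infty \le 1}\big|\int (\Delta^{-1}(\mu_n-\mu_m))(x)\,\omega(x)\,V(dx)\big| = \sup_{\|\omega\|_\infty \le 1}\big|\int (\Delta^{-1}(V\omega))(y)\,(\mu_n-\mu_m)(dy)\big|$ using the symmetry of the kernel $|x-y|^{-1}$ and Fubini, I see this is exactly the sup of pairings of the fixed bounded family $\{\mu_n - \mu_m\}$ against the precompact family $\{\Delta^{-1}(V\omega)\}_{\|\omega\|_\infty \le 1} \subset C(K)$. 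By passing to a subsequence so that $\mu_n$ converges weak-$*$ in $\meas$ (which is the dual of $C(K)$ once we note $\operatorname{supp}$ is contained in a fixed compact set — here I should first truncate or note $\Delta^{-1}(V\omega)$ decays, so work on a one-point compactification or just a large ball), and invoking the standard fact that weak-$*$ convergence is uniform on norm-precompact subsets of the predual, I get that $\langle \mu_n, \Delta^{-1}(V\omega)\rangle$ converges uniformly in $\omega$, i.e. $V\Delta^{-1}\mu_n$ is Cauchy in $\meas$.

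The main obstacle is the duality/Fubini bookkeeping: one must be careful that $\Delta^{-1}\mu$ is genuinely a continuous function (not merely a distribution) so that multiplying it against $V$ is legitimate — this is where~\eqref{eq:localKato} is essential, and it is exactly the content flagged as "proved as part of Lemma~\ref{lem:compactness}" in the excerpt, so I would fold that verification in here: given~\eqref{eq:localKato}, the estimate $|\Delta^{-1}\mu(x) - \Delta^{-1}\mu(x')| \le \frac{1}{4\pi}\int \big||x-y|^{-1} - |x'-y|^{-1}\big|\,|\mu|(dy)$ is controlled by splitting the integral into $|x-y| < r$, $|x'-y| < r$, and the complement, using the local Kato modulus on the first two pieces and uniform continuity of $|x-y|^{-1}$ away from the diagonal on the third, yielding a modulus of continuity independent of the base point; the same computation with $V\omega$ in place of $\mu$ gives equicontinuity of the whole family. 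A secondary technical point is handling the non-compactness of $\R^3$ in the weak-$*$ argument — since $V$ has compact support and $\Delta^{-1}(V\omega)(y) \to 0$ as $|y|\to\infty$ at a rate uniform in $\omega$, the family $\{\Delta^{-1}(V\omega)\}$ actually lies in (a precompact subset of) $C_0(\R^3)$, whose dual is $\meas$, so weak-$*$ sequential compactness of bounded sets in $\meas$ applies directly and the argument closes.
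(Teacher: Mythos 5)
Your core mechanism is the right one, and it is essentially the paper's: pair $V\Res(\lambda^2)\mu$ against $\omega\in L^\infty$, use Fubini to transfer everything onto the family $\{\Rminus_0(\lambda^2)(V\omega):\|\omega\|_\infty\le 1\}$, show that family is bounded, equicontinuous, and decaying at infinity (hence precompact in $C_0(\R^3)$ by Arzel\`a--Ascoli), and dualize. The paper does exactly this, phrased as compactness of $\Rminus_0(\lambda^2)V$ on bounded functions followed by duality. The genuine gap is your reduction to $\lambda=0$. ``Approximating $e^{i\lambda|x-y|}$ by finite trigonometric sums'' in the variable $t=|x-y|$ accomplishes nothing: $e^{i\lambda t}$ already is a single trigonometric term, and any such sum $\sum_k c_k e^{i\omega_k t}$ turns the kernel into a linear combination of the operators $V\Res(\omega_k^2)$ --- i.e., back into instances of the operator whose compactness is in question, not into $V\Delta^{-1}$. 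Likewise there is no ``bounded multiplication by $e^{i\lambda|x-y|}$'' to compose with: that factor depends jointly on $x$ and $y$ through $|x-y|$ and does not split as $a(x)b(y)$, so $V\Res(\lambda^2)$ is not of the form $M_1\circ V\Delta^{-1}\circ M_2$ with $M_i$ bounded. As a result, the equicontinuity computation you actually carry out (for the kernel $|x-y|^{-1}$ only) does not cover $\lambda\neq 0$, and the proof as written establishes the lemma only at $\lambda=0$.

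The repair is cheap and you already have every ingredient. Either (a) run your equicontinuity estimate directly on the oscillatory kernel $e^{-i\lambda|x-y|}/(4\pi|x-y|)$: split into $|y-x_1|<r$, where the local Kato condition~\eqref{eq:localKato} absorbs the singularity exactly as in your static computation, and $|y-x_1|\ge r$, where the Mean Value Theorem gives the pointwise bound $\max(|\lambda|,2/|y-x_1|)\,|x_1-x_2|/(2\pi|y-x_1|)$; this is precisely the paper's argument, and the only price of the oscillation is a factor of $|\lambda|$ in the modulus of continuity, which is harmless at fixed $\lambda$. Or (b) note that the difference kernel $(e^{i\lambda|x-y|}-1)/(4\pi|x-y|)$ is bounded by $|\lambda|/4\pi$ and Lipschitz in $y$ uniformly over $x$ in the (compact) support of $V$, so that $V\Res(\lambda^2)-V\Res(0)$ is compact by an even easier Arzel\`a--Ascoli argument requiring no Kato condition, after which your $\lambda=0$ case finishes the proof. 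With either patch the remainder of your argument --- the total-variation duality, the Fubini step justified by $\|V\|_{\Kato}\|\mu\|_{\meas}<\infty$, the passage to $C_0(\R^3)$ using the uniform decay of $\Rminus_0(\lambda^2)(V\omega)$, and the uniformity of weak-$*$ convergence on norm-precompact subsets of the predual --- is sound and complete.
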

\begin{proof}
It is easier to show that the operator $\Rminus_0(\lambda^2)V$ acts compactly
on the space of bounded functions in $\R^3$.  The stated result follows by
duality.

Let $g$ be any bounded measurable function with $\sup_{x\in\R^3}|g(x)| \leq 1$.  Then 
$|\Rminus_0(\lambda^2)Vg(x)| \leq (4\pi)^{-1}\norm[V][\Kato]$, so the image of the
unit ball is bounded as expected.  To verify equicontinuity, examine the difference
\begin{equation*}
|\Rminus_0(\lambda^2)Vg(x_1) - \Rminus_0(\lambda^2)Vg(x_2)|
\leq \int_{\R^3} |g(y)| \bigg|\frac{e^{-i\lambda|x_1-y|}}{4\pi|x_1-y|}
 - \frac{e^{-i\lambda|x_2-y|}}{4\pi|x_2-y|}\bigg|\,|V|(dy)
\end{equation*}
Fix a value of $r >0$ such that $\int_{|x-y|<2r} |x-y|^{-1} |V|(dy) < \eps$
for every $x \in \R^3$.  Assuming $|x_1 - x_2| < \frac{r}{2}$, the integral
splits into the regions $|y-x_1| < r$ and $|y-x_1| > r$.
In the former region there is little cancellation so the integral is maximized by 
\begin{equation*}
\int_{|y-x_1| < r} (4\pi|x_1-y|)^{-1}\,|V|(dy) + 
\int_{|y-x_2| < 2r} (4\pi|x_2-y|)^{-1}\,|V|(dy) < (2\pi)^{-1}\eps.
\end{equation*}
In the latter region the Mean value Theorem places a bound
\begin{align*}
\bigg|\frac{e^{-i\lambda|x_1-y|}}{4\pi|x_1-y|}
 - \frac{e^{-i\lambda|x_2-y|}}{4\pi|x_2-y|}\bigg|
&\leq \frac{1}{2\pi} \max(|\lambda|, {\txt \frac{2}{|y-x_1|}}) 
 \frac{|x_1 - x_2|}{|y-x_1|}\\
&< (2\pi)^{-1}\max(|\lambda|, 2r^{-1})\frac{|x_1 - x_2|}{|y-x_1|}
\end{align*}
where we have used the geometric property $|y-x_2| > \frac12|y-x_1|$.
It follows that $|\Rminus_0(\lambda^2)Vg(x_1) - \Rminus_0(\lambda^2)Vg(x_2)|
< C\eps$ provided $|x_1 - x_2| < \min(r, |\lambda|^{-1})\eps$.

Furthermore, for all $x$ well outside the support of $V$, there is the decay
estimate $|\Rminus_0(\lambda^2)Vg(x)| < 2|x|^{-1}\norm[V][\meas]$.
Compactness of the operator $\Rminus_0(\lambda^2)V$ now follows from the
Arzel\`a-Ascoli theorem.
\end{proof}

\begin{lemma} \label{lem:Agmon}
Suppose $V\in \meas$ (with no support assumption) satisfies~\eqref{eq:localKato},
and for some $\lambda \not= 0$ there exists a nonzero solution $\mu \in \meas$ to
the eigenvalue equation $(I + V\Res(\lambda^2))\mu = 0$.  Then
$\Res(\lambda^2)\mu$ is an $L^2$ eigenfunction of the operator $-\Delta + V$
with eigenvalue $\lambda^2$.
\end{lemma}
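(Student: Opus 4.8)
The plan is to set $\psi := \Res(\lambda^2)\mu$ and to establish, in order: that $\psi$ is a genuine function solving $(-\Lap+V)\psi = \lambda^2\psi$ distributionally with $V\psi = -\mu$; that $\widehat\mu$ vanishes on the sphere $\{|\xi| = |\lambda|\}$; and that this forces $\psi\in L^2(\R^3)$. For the first point, the boundedness of $V\Res(\lambda^2):\meas\to\meas$ (Lemma~\ref{lem:compactness}) already shows that $\psi(x) = (4\pi)^{-1}\int e^{i\lambda|x-y|}|x-y|^{-1}\,\mu(dy)$ is finite for $|V|$-a.e.\ $x$, while the hypothesis $(I+V\Res(\lambda^2))\mu = 0$ says precisely that $d\mu = -\psi\,dV$. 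Substituting this into $\psi = -\Res(\lambda^2)(V\psi)$ and iterating --- using the equicontinuity estimates from Section~\ref{sec:selfadjoint} and the local Kato bound for $d$-dimensional measures (Proposition~\ref{prop:dimensionKato}) --- promotes $\psi$ to a function that is bounded when $V$ has compact support and, in any case, square-integrable against $|V|$ and locally in $L^2(\R^3)$. Since $(4\pi|x|)^{-1}e^{i\lambda|x|}$ is the outgoing fundamental solution of $-\Lap-\lambda^2$ (see~\eqref{eq:freeres}), we obtain $(-\Lap-\lambda^2)\psi = \mu$ in $\mathcal{D}'(\R^3)$, and substituting $\mu = -V\psi$ gives $(-\Lap+V)\psi = \lambda^2\psi$; note $\psi\not\equiv 0$ since otherwise $\mu = -V\psi = 0$.

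The heart of the matter is a conservation-of-flux computation. Because $V$ is a signed real measure and $d\mu = -\psi\,dV$, the number $\int_{\R^3}\overline\psi\,d\mu = -\int_{\R^3}|\psi|^2\,dV$ is real. On the other hand, since $\Res(\lambda^2)^* = \Rminus_0(\lambda^2)$,
\begin{equation*}
\begin{aligned}
2i\,{\rm Im}\Big(\int_{\R^3}\overline\psi\,d\mu\Big)
&= \la\mu,\big(\Rminus_0(\lambda^2)-\Res(\lambda^2)\big)\mu\ra \\
&= -\frac{i}{2\pi}\iint_{\R^3\times\R^3}\frac{\sin(\lambda|x-y|)}{|x-y|}\,\overline{d\mu(x)}\,d\mu(y).
\end{aligned}
\end{equation*}
As $\sin(\lambda|z|)/|z|$ is a nonzero constant multiple of the inverse Fourier transform of the surface measure on $\{|\xi| = |\lambda|\}$, the double integral equals $c_\lambda\int_{|\xi| = |\lambda|}|\widehat\mu(\xi)|^2\,d\sigma(\xi)$ with $c_\lambda\neq 0$ --- a form of Stone's formula for $-\Lap$. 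Since the left side vanishes, $\widehat\mu \equiv 0$ on $\{|\xi| = |\lambda|\}$; equivalently the outgoing solution $\psi$ has zero far-field amplitude.

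With the far-field amplitude gone, the singular part of the resolvent drops out: $\Res(\lambda^2)\mu = \Rminus_0(\lambda^2)\mu$, and on the Fourier side $\widehat\psi$ is the principal value of $\widehat\mu(\xi)/(|\xi|^2-\lambda^2)$ with no surface-delta contribution. When $V$ has compact support --- the setting of Theorem~\ref{thm:main} --- $\mu = -V\psi$ is a compactly supported finite measure, so $\widehat\mu$ is real-analytic; its vanishing on $\{|\xi| = |\lambda|\}$ cancels the simple transverse zero of $|\xi|^2-\lambda^2$ there, and since $\widehat\mu$ is bounded one has $\widehat\psi(\xi) = O((1+|\xi|)^{-2})$ at infinity, so $\widehat\psi\in L^2(\R^3)$ and hence $\psi\in L^2(\R^3)$. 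Together with the distributional equation, $\psi$ is then an $L^2$ eigenfunction of $-\Lap+V$ with eigenvalue $\lambda^2$. For potentials arising only as Kato-norm limits of compactly supported $d$-dimensional measures, the same conclusion follows from the approximation scheme recorded at the conclusion of this section, or from a standard decay argument for outgoing solutions with vanishing far-field amplitude.

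The step I expect to demand the most care is not any single inequality but the choice of function-space framework: $\psi$ must be simultaneously regular against Lebesgue measure --- so that $\widehat\psi$ is controlled near $\{|\xi| = |\lambda|\}$ and the conclusion $\psi\in L^2(\R^3)$ is valid --- and integrable against $|V|$ --- so that $\int_{\R^3}\overline\psi\,d\mu = -\int_{\R^3}|\psi|^2\,dV$ is meaningful. In the measure-valued setting the only automatic a priori information is $\psi\in L^1(d|V|)$, and upgrading it relies on the higher integrability of Newtonian potentials of $d$-dimensional measures, which for $d$ only slightly above $1$ requires iterating the bootstrap several times; and in the non-compactly-supported case one must additionally rule out a slowly decaying tail in the last step.
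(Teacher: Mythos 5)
Your overall route is the same as the paper's: show $\psi=\Res(\lambda^2)\mu$ is bounded and solves the eigenvalue equation distributionally, use reality of $\int|\psi|^2\,dV$ to force ${\rm Im}\,\la\Res(\lambda^2)\mu,\mu\ra = c_\lambda\int_{\lambda\sph^2}|\hat\mu|^2\,d\sigma=0$, and convert the vanishing of $\hat\mu$ on the sphere into $\psi\in L^2$. Two remarks on where you diverge. First, your regularity step is vaguer than it needs to be: the paper gets $\psi\in L^\infty$ in one stroke by splitting $\Res(\lambda^2)=\Rplus_1+\Rplus_2$ at a radius $r$ where $\sup_y\int_{|x-y|<r}|x-y|^{-1}|V|(dx)<1$, writing $\psi=(I+\Rplus_1V)^{-1}\Rplus_2\mu$, and summing the Neumann series on $L^\infty$; this uses only~\eqref{eq:localKato} and needs neither compact support nor the $d$-dimensionality of Proposition~\ref{prop:dimensionKato}, so your qualifier ``bounded when $V$ has compact support'' is giving away something you actually have for free.

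Second, and more seriously, your final step has a genuine gap exactly where the lemma's hypothesis ``with no support assumption'' bites. Your Fourier-division argument ($\hat\mu$ analytic, hence vanishing to first order across $\{|\xi|=|\lambda|\}$, hence $\hat\mu/(|\xi|^2-\lambda^2)$ locally bounded) requires $\mu$ compactly supported. For a general finite measure, $\hat\mu$ is merely bounded and uniformly continuous; knowing it vanishes on the sphere gives no rate of vanishing, and $\hat\mu/(|\xi|^2-\lambda^2)$ need not be locally square-integrable near the sphere, so the division argument fails. Your two proposed rescues do not close this: the approximation scheme at the end of the section approximates $V$ in Kato norm and does not transport solutions of $(I+V\Res(\lambda^2))\mu=0$ to the approximants, and the ``standard decay argument for outgoing solutions with vanishing far-field amplitude'' \emph{is} the missing theorem rather than a routine fact --- it requires the stationary-phase asymptotics $\Res(\lambda^2)\mu(x)\sim \frac{e^{i\lambda|x|}}{4\pi|x|}\hat\mu(\lambda x/|x|)+o(|x|^{-1})$ for measure data, which is precisely the content of Proposition~4.1 of~\cite{GoSc04b} that the paper cites (and extends from $L^1$ to $\meas$). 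So your proof is complete for compactly supported $V$, which suffices for the first half of Theorem~\ref{thm:main}, but the lemma as stated (and as invoked for Kato-norm limits, which need not be compactly supported) requires the cited proposition or an equivalent substitute.
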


\begin{proof}
Based on the one-sided inverse $(-\Delta - \lambda^2)\Res(\lambda^2) = I$ acting 
on $\meas$, any measure satisfying $\mu = -V\Res(\lambda^2)\mu$ gives rise to
the identity
\begin{equation*}
(-\Delta - \lambda^2)\Res(\lambda^2)\mu = -V\Res(\lambda^2)\mu.
\end{equation*}
Then $\Res(\lambda^2)\mu$ belongs to the null-space of
$(-\Delta + V - \lambda^2)$.  The image of a typical element of $\meas$ under
$\Res(\lambda^2)$ belongs to the weighted space $|x|^{\frac12+\eps}L^2$,
which would correspond to a resonance of $-\Delta + V$ rather than an eigenvalue.
We show next that if $\lambda \not= 0$ then in fact $\mu \in \meas$
has special mapping properties that place $\Res(\lambda^2)\mu \in L^2(\R^3)$.

Split the free resolvent into the sum of its local and nonlocal parts.  The local  part
$\Rplus_1$ is convolution against 
$(4\pi|x|)^{-1}e^{i\lambda|x|}\chi_{|x| < r}$, and the nonlocal part
$\Rplus_2$ is convolution against the bounded function
$(4\pi|x|)^{-1}e^{i\lambda|x|}\chi_{|x| \geq r}$.  The value of $r$ is chosen so that
$\sup_{y\in\R^3}  \int_{|x-y| < r} |x-y|^{-1}\,|V|(dx) < 1$.

Each solution of the eigenvalue equation satisfies $\Res(\lambda^2)\mu =
- \Res(\lambda^2)V\Res(\lambda^2)\mu$, which splits into
$-(\Rplus_1 + \Rplus_2)V\Res(\lambda^2)\mu$
leading to the identity
\begin{equation*}
\Res(\lambda^2)\mu = - (I + \Rplus_1 V)^{-1}\Rplus_2 V \Res(\lambda^2)\mu
= (I + \Rplus_1 V)^{-1}\Rplus_2 \mu
\end{equation*}

The integration kernel of $\Rplus_2$ is bounded everywhere by $r^{-1}$, so
$\Rplus_2\mu$ belongs to the space of bounded functions on $\R^3$.
Meanwhile the splitting radius was chosen so that the operator inverse
$(I + \Rplus_1 V)^{-1}$ acting on bounded functions has a convergent
Neumann series expansion.  Thus $\sup_x |\Res(\lambda^2)\mu(x)| < \infty$. 

Observe that the duality pairing $\la \Res(\lambda^2)\mu, \mu\ra$ is
well defined, with the property
\begin{equation*}
\la \Res(\lambda^2)\mu, \mu\ra  =
- \la \Res(\lambda^2)\mu, V\Res(\lambda^2)\mu\ra
= -\int_{\R^3} |\Res(\lambda^2)\mu (x)|^2 \,V(dx) \in \R
\end{equation*}
because $V$ is assumed to be a real-valued signed measure.
Consequently
\begin{equation*}
{\rm Im}\,\la \Res(\lambda^2)\mu, \mu\ra = C\lambda^{-1}
\int_{\lambda\sph^2} |\hat{\mu}(\xi)|^2\,dS(\xi)
= 0
\end{equation*}
by Parseval's identity, which implies that the Fourier transform of $\mu$
vanishes on the sphere radius $|\lambda|$.  It follows from
Proposition~4.1 of~\cite{GoSc04b} that $\Res(\lambda^2)\mu \in L^2(\R^3)$.
If $V$ is assumed to have compact support then direct examination of
$\hat{\mu}$ in the neighborhood of the sphere $|\xi| = |\lambda|$ shows that
$\Res(\lambda^2)\mu$ also has rapid polynomial decay at infinity.
\end{proof}

\begin{remark}
The proposition cited shows that $\Res(\lambda^2)f \in L^2(\R^3)$ provided
$f\in L^1(\R^3)$ and $\hat{f}\big|_{\lambda\sph^2} = 0$.
It can be extended to finite measures with only superficial changes to the
proof.
\end{remark}

Lemma~\ref{lem:Agmon} eliminates the possibility of embedded resonances.
Pointwise existence of $(I + V\Res(\lambda^2))^{-1}$ still requires that
$-\Delta + V$ have no embedded eigenvalues, and no resonance or
eigenvalue at $\lambda = 0$.  These properties are incorporated into the
spectral assumptions of Theorem~\ref{thm:main}.  For a large class of
potentials the embedded eigenvalue condition is automatically satisfied.

\begin{proposition} \label{prop:absence}
Suppose $V$ is a compactly supported measure of dimension $d > 2$.
Then $-\Delta + V$ has no embedded eigenvalues $\lambda^2 > 0$.
\end{proposition}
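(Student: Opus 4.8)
The plan is to argue by contradiction along the classical route for the absence of positive eigenvalues: a putative eigenfunction is simultaneously outgoing and incoming, which produces Rellich‑type rigidity, and unique continuation then forces it to vanish; the role of $d>2$ is to keep the measure potential within reach of the unique‑continuation machinery. Suppose $\psi\neq 0$ solves $(-\Delta+V)\psi=\lambda^2\psi$ with $\psi\in L^2(\R^3)$ and $\lambda>0$. Since $-\Delta+V$ is the self‑adjoint operator attached by KLMN to the form on $\dot H^1(\R^3)$, the eigenfunction lies in $\dot H^1\cap L^2=H^1(\R^3)$, hence in $L^2(V)$ by the embedding of Section~\ref{sec:selfadjoint}; thus $\mu:=\psi\,V$ is a finite, compactly supported measure with $\norm[\mu][\meas]\les |V|(\R^3)^{1/2}\norm[\psi][L^2(V)]$, and $(-\Delta-\lambda^2)\psi=-\mu$ distributionally. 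On the Fourier side this reads $(|\xi|^2-\lambda^2)\hat\psi=-\hat\mu$ with $\hat\mu$ entire; since $\hat\psi\in L^2$, this forces $\hat\mu$ to vanish on the sphere $\{|\xi|=|\lambda|\}$ (otherwise $\hat\psi\sim\hat\mu(\xi)/(|\xi|^2-\lambda^2)$ fails to be square‑integrable across that sphere), and then $\psi=-\Res(\lambda^2)\mu$. At this point the computation in the last paragraph of the proof of Lemma~\ref{lem:Agmon} applies verbatim — it used only the compact support of $V$ and the vanishing of $\hat\mu$ on the sphere — so $\psi$ is bounded and decays faster than any polynomial at infinity.

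Off $\mathrm{supp}\,V$ the measure $\mu$ vanishes, so there $\psi$ is a real‑analytic solution of $(-\Delta-\lambda^2)\psi=0$; on the unbounded component of $\R^3\setminus\mathrm{supp}\,V$ it has super‑polynomial decay, so Rellich's uniqueness theorem forces $\psi\equiv 0$ on that whole component, and in particular $\psi$ vanishes outside a ball containing $\mathrm{supp}\,V$. The hypothesis $d>2$ also improves the regularity of $\psi$: because $(3-d)/2<\tfrac12$, the trace inequality $\norm[\vp][L^2(V)]\les\norm[\vp][H^s]$ holds for some $s<\tfrac12$, so multiplication by $V$ maps $H^s$ into $H^{-s}$; hence $\mu=\psi V$ is a compactly supported element of $H^{-s}$ and $\psi=-\Res(\lambda^2)\mu\in H^{2-s}_{\rm loc}\hookrightarrow C^0(\R^3)$ is continuous. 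If $\mathrm{supp}\,\psi\subseteq\mathrm{supp}\,V$ we are finished, since for $d<3$ that set is Lebesgue‑null and a continuous function supported on a null set is $\equiv 0$; and when $d=3$ the bound $|V|(B(x,r))\le C_\mu r^3$ makes $V$ a compactly supported $L^\infty$ function, for which the statement is classical.

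The remaining possibility is that $\psi$ is ``trapped'' in a bounded region enclosed by $\mathrm{supp}\,V$ — which can occur only if $\mathrm{supp}\,V$ separates $\R^3$ — and excluding this is a unique‑continuation statement for $(-\Delta-\lambda^2+V)\psi=0$ across the singular set $\mathrm{supp}\,V$. I expect essentially all of the difficulty to be here: classical unique continuation requires $V\in L^{3/2}_{\rm loc}$, which a singular $d$-dimensional measure is not, but the boundedness of multiplication by $V$ from $H^s$ to $H^{-s}$ with $s<\tfrac12$ established above brings the equation within range of a Carleman estimate at the $H^{1/2}$ level; combining this with the continuity of $\psi$, its analyticity off $\mathrm{supp}\,V$, and the zero set supplied by Rellich (peeling inward through the complement of $\mathrm{supp}\,V$ layer by layer) should give $\psi\equiv 0$, the desired contradiction. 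Everything preceding this final step is routine adaptation of tools already in place — the $\dot H^1$–$L^2(V)$ embedding, the resolvent identities, and the decay argument of Lemma~\ref{lem:Agmon} — so the genuine work is to install, or cite, a unique‑continuation principle that tolerates potentials which are $d$-dimensional measures with $d>2$.
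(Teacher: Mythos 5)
Your argument stops exactly at the point where the proposition actually lives. The preliminary steps (the identity $\psi=-\Res(\lambda^2)\mu$ with $\mu=\psi V$, the vanishing of $\hat\mu$ on the sphere, the rapid decay, and Rellich's theorem killing $\psi$ on the unbounded component of the complement of $\mathrm{supp}\,V$) are all reasonable and essentially parallel to Lemma~\ref{lem:Agmon}, but the conclusion then hinges entirely on a unique continuation principle across the singular set $\mathrm{supp}\,V$, which you explicitly leave as something to ``install, or cite.'' That is the whole content of Proposition~\ref{prop:absence} and the only place the hypothesis $d>2$ is genuinely needed, so as written the proof has a gap rather than a different route. (A secondary slip: for $2<d<3$ the support of a $d$-dimensional measure need not be Lebesgue-null --- the growth condition $|V|(B(x,r))\le C_V r^d$ constrains the measure, not its support --- so even the ``$\mathrm{supp}\,\psi\subseteq\mathrm{supp}\,V$'' branch of your case analysis does not close.)

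The paper fills this gap by invoking the Koch--Tataru theorem of \cite{KoTa06}, which asserts the absence of embedded eigenvalues outright provided multiplication by $V$ is bounded from $\dot{W}^{\frac14,4}(\R^3)$ to its dual, i.e.\ provided $\dot{W}^{\frac14,4}(\R^3)\hookrightarrow L^2(V)$. Note that this is an $L^4$-based Sobolev condition. Your proposed substitute --- that $V$ maps $H^s$ to $H^{-s}$ for some $s<\tfrac12$, which indeed follows from the embedding $\dot{H}^{\frac{3-\gamma}{2}}(\R^3)\hookrightarrow L^2(V)$ for $\gamma<d$ --- is correct but is not known to suffice for the relevant Carleman estimates; the Koch--Tataru machinery rests on Stein--Tomas-type restriction bounds and genuinely requires the $L^4$ scale. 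The substantive work in the paper's proof is precisely to upgrade the $L^2$-based embedding: it combines $\dot{H}^{\frac{3-\gamma}{2}}\hookrightarrow L^2(V)$ with a $T^*T$ argument showing $(1-\Delta)^{\frac{2-\gamma}{2}}L^\infty(\R^3)\subset L^2(V)$, and interpolates to obtain $W^{\frac14,4}(\R^3)\subset L^2(V)$ (then passes to the homogeneous space using compact support). Without this verification, or an equivalent unique continuation statement for $d$-dimensional measure potentials, your argument does not conclude.
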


\begin{proof}
The unique continuation theorems in~\cite{KoTa06} asserting the absence of
embedded eigenvalues apply here provided multiplication by $V$ is a bounded
map from $\dot{W}^{\frac14, 4}(\R^3)$ to its dual space.  An equivalent condition is
that $\dot{W}^{\frac14,4}(\R^3)$ embeds as a subspace of $L^2(V)$.

If $V$ is supported in $B(0,2^M)$ with dimesnion $d > 2$ then $V$ also
satisfies the Kato-type condition $\sup_y \int_{\R^3} |x-y|^{-\gamma}\,|V|(dx)
< \infty$ for any $\gamma <d$,
by imitating the proof of Proposition~\ref{prop:dimensionKato}.
Applying the argument for Kato-class potentials  in Section~\ref{sec:selfadjoint} leads
to the conclusion here that $(-\Delta)^{\frac{\gamma-3}{2}}V$ is a bounded map on
$L^p(V)$ for all $1 \leq p \leq \infty$, and in particular that
$\dot{H}^{\frac{3-\gamma}{2}}(\R^3)$ embeds into $L^2(V)$.  The same is
certainly true for the non-homogeneous space $H^{\frac{3-\gamma}{2}}$ as well.

A straightforward $T^*T$ argument shows that 
$(1-\Delta)^{\frac{2-\gamma}{2}}L^\infty(\R^3)$ is also contained in $L^2(V)$
for any $\gamma > 2$.  Noting that $(1-\Delta)^{\frac{2-\gamma}{2}}$ has
an integrable convolution kernel $K_\gamma(x)$ there is an immediate bound
\begin{equation*}
\norm[(1-\Delta)^{\frac{2-\gamma}{2}}V(1-\Delta)^{\frac{2-\gamma}{2}}f][1]
 \leq \norm[K_\gamma][1]^2 \norm[V][\meas] \norm[f][\infty].
\end{equation*}

Finally, the fact that $W^{\frac14,4}(\R^3) \subset L^2(V)$ follows by choosing
$2 < \gamma < d$ and applying Riesz-Thorin interpolation to the $L^2$ and
$L^\infty$ estimates.  The assumption that $V$ is compactly supported permits
an extension to the homogeneous Sobolev space $\dot{W}^{\frac14,4}(\R^3)$ as
desired.

\end{proof}

At this point we have shown that 
$\norm[(I + \eta(\lambda/L)V\Res(\lambda^2))^{-1}][]$
is uniformly bounded on any compact set $\lambda \in [-L_0, L_0]$ for all
$L \geq L_0$.  A separate high-energy argument is required to set a
value for $L_0$ for which the operator inverse can be controlled independent
of $|\lambda|, L > L_0$.  We will show that, under the assumption that $V$ has
dimension $d>\frac32$, that 
$\lim_{\lambda\to\pm\infty}\norm[(V\Res(\lambda^2))^2][] = 0$ as a bounded
operator on $\meas$.
Then the operator inverse is controlled by $1 + \norm[V][\Kato]$ for all
sufficiently large $\lambda$ and $L$, by applying~\eqref{eq:T_LinW_M} and
summing the Neumann series.

Our calculations rely on a resolvent estimate at high energy relating
$L^2(V)$ to its dual space.

\begin{theorem} \label{thm:resolvent}
Suppose $V$ is a compactly supported measure of dimension $d > \frac32$.
There exists $\eps > 0$ so that the free resolvent satisfies
\begin{equation} \label{eq:resolvent}
\norm[\Res(\lambda^2)Vf][L^2(V)] \les \japanese[\lambda]^{-\eps}
\norm[f][L^2(V)].
\end{equation}
\end{theorem}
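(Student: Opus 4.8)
\medskip
\noindent\emph{Proof proposal.}\
The plan is to pass to the Fourier side and regard $\Res(\lambda^2)$ as a multiplier.  As $\Res(\lambda^2)\mu$ has Fourier transform $\hat\mu(\xi)\big(|\xi|^2-(\lambda+i0)^2\big)^{-1}$, Parseval's identity yields, for $f,g\in L^2(V)$,
\begin{equation*}
\la\Res(\lambda^2)Vf,\ g\ra_{L^2(V)}\ =\ \frac1{(2\pi)^3}\int_{\R^3}
\frac{\widehat{fV}(\xi)\,\overline{\widehat{gV}(\xi)}}{|\xi|^2-(\lambda+i0)^2}\,d\xi ,
\end{equation*}
so \eqref{eq:resolvent} amounts to bounding the right-hand side by $\langle\lambda\rangle^{-\eps}\norm[f][L^2(V)]\norm[g][L^2(V)]$.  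One may take $\lambda\ge 1$: for $|\lambda|\le 1$ the pointwise identity $|\Res(\lambda^2)(x,y)|=(4\pi|x-y|)^{-1}$ reduces the claim to the boundedness of $(-\Delta)^{-1}V$ on $L^2(V)$ established in Section~\ref{sec:selfadjoint}.  Away from the sphere $|\xi|=\lambda$ the only input about $V$ will be the crude restriction bound
\begin{equation*}
\int_{|\xi|\le R}|\widehat{fV}(\xi)|^2\,d\xi\ =\ \iint\widehat{\chi_{B_R}}(x-y)\,f(x)\,\overline{f(y)}\,V(dx)\,V(dy)\ \les\ R^{3-d}\norm[f][L^2(V)]^2 ,
\end{equation*}
which follows from the Schur test using $|\widehat{\chi_{B_R}}(z)|\les R^3(1+R|z|)^{-2}$ and the ball condition~\eqref{eq:dimension}, in the spirit of Proposition~\ref{prop:dimensionKato}.

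\medskip
First I would dispose of the frequencies far from the sphere: for $|\xi|<\lambda/2$ the denominator is $\sim\lambda^2$ and for $|\xi|>2\lambda$ it is $\sim|\xi|^2$, so inserting the crude bound dyadically in $|\xi|$ gives a contribution that decays polynomially in $\lambda$ because $d>1$.  In the shell $\big|\,|\xi|-\lambda\,\big|<\lambda/2$ I would split $\big(|\xi|^2-(\lambda+i0)^2\big)^{-1}$ into its principal value and its $\delta$-function part.  The $\delta$-part contributes $\frac{c}{\lambda}\int_{\lambda\sph^2}\widehat{fV}\,\overline{\widehat{gV}}\,dS$; dyadically splitting the shell into annuli $A_\delta=\{\,\big|\,|\xi|-\lambda\,\big|\sim\delta\,\}$ with $\lambda^{-1}\les\delta\les\lambda$ reduces the principal-value part to a sum of terms $\les\frac1{\delta\lambda}\norm[\widehat{fV}][L^2(A_\delta)]\norm[\widehat{gV}][L^2(A_\delta)]$, the cancellation being recovered by subtracting the values of $\widehat{fV}\,\overline{\widehat{gV}}$ on $\lambda\sph^2$ and using that $\widehat{fV}$ is entire (equivalently, from a Hilbert-transform bound in the radial variable).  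Everything then reduces to the restriction-type inequality
\begin{equation} \label{eq:restr}
\norm[\widehat{fV}][L^2(A_\delta)]^2\ \les\ \delta\,\lambda^{1-\eps_0}\norm[f][L^2(V)]^2 ,\qquad
\norm[\widehat{fV}][L^2(\lambda\sph^2)]^2\ \les\ \lambda^{1-\eps_0}\norm[f][L^2(V)]^2 ,
\end{equation}
for some $\eps_0>0$, uniformly in $\lambda^{-1}\les\delta\les\lambda$.  Granting \eqref{eq:restr}, the $\delta$-part of the shell is $\les\lambda^{-\eps_0}\norm[f][L^2(V)]\norm[g][L^2(V)]$, and the principal-value part is $\les\lambda^{-\eps_0}\log\lambda$ times the same after summing the $O(\log\lambda)$ annuli; this gives \eqref{eq:resolvent} with any $\eps<\eps_0$.

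\medskip
The inequality \eqref{eq:restr} is the heart of the matter, and I expect it to be the main obstacle.  By duality it is equivalent to the extension bound $\norm[\widehat{g\,dS}][L^2(V)]^2\les\lambda^{1-\eps_0}\norm[g][L^2(\lambda\sph^2)]^2$ (with a $\delta$-thickened companion), where $dS$ is surface measure on $\lambda\sph^2$; after rescaling to the unit sphere this is a bound for the adjoint Fourier restriction operator of $\sph^2$ measured in $L^2(V)$.  Here the Tomas--Stein theorem, together with its extensions to fractal reference measures, must be used in an essential way: the ball condition alone only produces the lossy exponent $\lambda^{3-d}$ in place of $\lambda^{1-\eps_0}$, so one has to genuinely exploit the curvature of the sphere, and obtaining an exponent strictly below $1$ is exactly what forces $V$ to be $d$-dimensional past the Tomas--Stein threshold, which is $d=\frac32$ in $\R^3$.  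Two technical points remain: the principal-value part in the annuli closest to $\lambda\sph^2$, handled as indicated; and the preliminary step, before the restriction machinery is applied, of replacing $V$ by its mollification at scale $\lambda^{-1}$ so that $\widehat{fV}$ becomes effectively band-limited to $B_{2\lambda}$ --- one must verify that the errors this introduces remain summable over the dyadic decomposition.
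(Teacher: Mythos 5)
Your proposal is correct and follows essentially the same route as the paper: the resolvent multiplier is split into an off-sphere part (handled by the dimension/Kato bound, giving $\lambda^{1-d}$ decay), a surface-measure part, and a principal-value part near $|\xi|=\lambda$, with the latter two reduced to precisely the fractal restriction estimate you isolate in \eqref{eq:restr} --- which the paper supplies by citing Erdogan's theorem (Theorem~\ref{thm:Erdogan}), whose exponent $\beta>\frac78-\frac d4$ drops below $\frac12$ exactly when $d>\frac32$, matching your $\eps_0$. The only real difference is cosmetic: for the principal value you use dyadic annuli plus radial cancellation, whereas the paper writes the near-sphere part as a one-dimensional p.v.\ integral in the radius $s$ of $F_s = Vf*\frac{\sin(s|x|)}{|x|}$ and integrates by parts using the smoothness of $F_s$ in $s$, obtained from the same restriction bound applied to $x^\alpha f$.
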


There are close connection between the free resolvent $\Res(\lambda^2)$
and the restriction of Fourier transforms to the sphere $\lambda\sph^2$.
We make use of a Fourier restriction estimate proved by Erdogan~\cite{Er05},
with the specific case of interest in three dimensions extracted below.
\begin{theorem}[\cite{Er05}, Equation (15)] \label{thm:Erdogan}
Let $A_R$ denote the annulus $|x-R|< 1$ inside $\R^3$, with $R>1$.  
Suppose $V$ is a compactly supported measure of dimension 
$d \in (\frac32, \frac52)$.
Then functions $g$ supported in $A_R$ satisfy an inequality
\begin{equation} \label{eq:Erdogan}
\norm[g^\vee][L^2(V)] \les R^{\beta}\norm[g][2]
\end{equation}
for each $\beta > \frac78 - \frac{d}{4}$.  In particular it is possible to choose
$\beta < \frac12$.
\end{theorem}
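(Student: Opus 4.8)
The plan is to read~\eqref{eq:Erdogan} as a weighted $L^2$ Fourier extension estimate and prove it by the bilinear method of Erdogan~\cite{Er05}. Write $\mu = |V|$ for the total variation measure, so the hypothesis is the ball condition $\mu(B(x,r)) \les C_V r^{d}$ and the goal is $\int_{\R^3}|g^\vee|^2\,d\mu \les R^{2\beta}\|g\|_2^2$ for $g$ supported on $A_R$. First I would pass to the quadratic form by expanding the square and integrating the exponentials against $\mu$,
\[
\int_{\R^3}|g^\vee(x)|^2\,d\mu(x)
 = \iint_{A_R\times A_R} g(\xi)\,\overline{g(\zeta)}\,\widehat{\mu}(\zeta-\xi)\,d\xi\,d\zeta ,
\]
so the claim is equivalent to the operator bound $\|K\|_{L^2(A_R)\to L^2(A_R)}\les R^{2\beta}$, where $K$ is convolution against $\widehat{\mu}$ followed by restriction to $A_R$. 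The only input about $\mu$ that enters is the $L^2$-averaged decay of its Fourier transform, $\int_{|\eta|\le\Lambda}|\widehat{\mu}(\eta)|^2\,d\eta \les \Lambda^{3-d+\eps}$, which follows from the ball condition through the finiteness of the $s$-energy $\iint|x-y|^{-s}\,d\mu\,d\mu$ for every $s<d$.

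A crude bound already comes from the Schur test together with this decay. For fixed $\xi\in A_R$ the difference $\zeta-\xi$ ranges over a set of measure $|A_R|\sim R^2$ inside $\{|\eta|\les R\}$, so Cauchy--Schwarz gives $\int_{A_R}|\widehat{\mu}(\zeta-\xi)|\,d\zeta \les R\,(R^{3-d})^{1/2} = R^{(5-d)/2}$, hence $\beta=\tfrac54-\tfrac d4$. This misses the claimed exponent by exactly $\tfrac38$, and the entire point is to recover that gain from the curvature of the spheres foliating $A_R$, which the Schur test discards.

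To exploit curvature I would foliate $A_R$ by the spheres $\rho\sph^2$, $\rho\in[R-1,R+1]$, rescale each to the unit sphere, and decompose it into $\sim R$ Knapp caps of angular width $R^{-1/2}$, each essentially a flat $R^{1/2}\times R^{1/2}\times 1$ plate. Pairs of caps that are angularly separated are transverse, and on such pairs one applies Tao's bilinear adjoint restriction theorem for the sphere, whose sharp range in $\R^3$ is $L^q$ with $q>\tfrac53$; coupling this $L^q$ gain with the ball condition $\mu(B(x,r))\les C_V r^{d}$, which limits how much $\mu$-mass each dual tube can carry, is the mechanism that converts the bilinear improvement into a power of $R$. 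Summing the transverse contributions over the Whitney decomposition of $A_R\times A_R$ away from the diagonal, treating the near-diagonal (nearly parallel) caps by the crude estimate above, and optimizing the split yields the improved exponent $\beta>\tfrac78-\tfrac d4$. The window $d\in(\tfrac32,\tfrac52)$ is where this exponent is both valid and useful: $d>\tfrac32$ is exactly what makes $\beta<\tfrac12$ attainable (the threshold needed in Theorem~\ref{thm:resolvent}), while $d<\tfrac52$ is the range in which Erdogan records this particular exponent.

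The hard part will be the bilinear step: arranging the Whitney/Knapp decomposition so the curvature is genuinely used, invoking Tao's theorem with the fractal weight $\mu$ in place of Lebesgue measure, and---most delicately---summing the transverse pieces without forfeiting the gain. This is precisely the technical core of~\cite{Er05}, so rather than reproduce it I would quote Erdogan's extension estimate directly, having checked that our hypotheses (compact support and the $d$-dimensional ball condition with $d\in(\tfrac32,\tfrac52)$) are exactly those under which his bound holds and that his exponent specializes to $\tfrac78-\tfrac d4$ in three dimensions.
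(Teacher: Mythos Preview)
The paper does not prove this statement at all; it is quoted verbatim as Equation~(15) of~\cite{Er05}, and the surrounding text immediately proceeds to use it in the proof of Theorem~\ref{thm:resolvent}. Your proposal, after sketching the mechanism behind Erdogan's argument (the $TT^*$ reformulation, the crude Schur bound $\beta = \tfrac54 - \tfrac{d}{4}$, and the $\tfrac38$ gain coming from the bilinear restriction theorem and Whitney decomposition), arrives at the same conclusion: quote the result directly. So your approach matches the paper's, with the added benefit that your outline of \emph{why} the exponent $\tfrac78 - \tfrac{d}{4}$ arises is accurate and would help a reader who has not seen~\cite{Er05}.
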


\begin{proof}[Proof of Theorem~\ref{thm:resolvent}]
The specific inequality we derive has the form
\begin{equation*}
\norm[\Res(\lambda^2)Vf][L^2(V)] \les \lambda^{2\beta-1}\log \lambda
\norm[f][L^2(V)]
\end{equation*}
uniformly over $\lambda > 4$.  The logarithmic factor is most likely an
artifact of the method of estimation.  The case $\lambda < -4$ is identical
up to complex conjugation.

The free resolvent $R^\pm_0(\lambda^2)$ acts by multiplying Fourier transforms
pointwise by the distribution
\begin{equation*}
\frac{1}{|\xi|^2 - \lambda^2} 
\pm i\frac{\pi}{\lambda}\,d\sigma(|\xi| = |\lambda|).
\end{equation*}
For the surface measure term it suffices to note that since $V$ has compact
support the dual statement to~\eqref{eq:Erdogan} implies that
\begin{align*}
\bignorm[(Vf)^\wedge][L^2(A_R)] &\les
R^\beta \norm[f][L^2(V)] \\
{\rm and} \quad \bignorm[\nabla_\xi (Vf)^\wedge][L^2(A_R)] &\les
R^\beta \norm[f][L^2(V)]
\end{align*} 
from which it follows that $\bignorm[(Vf)^\wedge\big|_{|\xi| = R}][L^2(d\sigma)]
\les R^\beta \norm[f][L^2(V)]$.  The same estimates hold for higher derivatives 
$D^\alpha_\xi(Vf)^\wedge$ by considering $x^\alpha f \in L^2(V)$
instead of $f$. In particular there is control of the outward normal gradient 
\begin{equation*}
\Bignorm[{\txt \frac{\xi}{|\xi|}}
 \cdot\nabla_\xi(Vf)^\wedge(\xi)\big|_{|\xi|=R}][L^2(d\sigma)]
\les R^\beta \norm[f][L^2(V)]
\end{equation*}
which will come in handy in the next step.

Let $\phi$ be a smooth function
supported in the annulus $\frac12 \leq |\xi|< 2$ that is identically 1 when
$\frac34 \leq |\xi| \leq \frac32$. First use $\phi$ to cut the Fourier multiplier away
from the sphere of radius $\lambda$, with the result
\begin{equation*}
K_\lambda (x) := \Big(\frac{1-\phi(\xi/\lambda)}{|\xi|^2 - \lambda^2}\Big)^\vee (x) \sim
\begin{cases}
|x|^{-1}\ &{\rm if}\ |x| < \lambda^{-1} \\
\lambda\, O(\lambda |x|)^{-N} &{\rm if} |x| \geq \lambda^{-1}
\end{cases}
\end{equation*}

A slightly modified version of~\eqref{eq:dimensionKato} shows that
$\int_{\R^3} |K_\lambda(x-y)|\,|V|(dx) \les \lambda^{1-d}$ uniformly
in $y$.
The Schur test 
then shows that integration against
$K_\lambda(x-y)V(y)$ defines a bounded operator on $L^p(V)$, 
$1 \le p \le \infty$, with norm comparable to $\lambda^{1-d}$.
This part of the free resolvent, with frequencies removed from $\lambda$,
is bounded on $L^2(V)$ and enjoys relatively rapid polynomial decay
since $d > \frac32$.

For each $\frac{\lambda}{2}< s <2\lambda$ define $F_s(x)$ to
be $Vf * \frac{\sin(s|x|)}{|x|}$ so that
$s \hat{F}_s(\xi)$ is the restriction of $4\pi (Vf)^\wedge(\xi)$ to the sphere
$|\xi| =s$.  Based on the preceding estimates, both $F_s$ and $\frac{d}{ds}F_s$
belong to $L^2(V)$ with norms bounded by $\lambda^{2\beta - 1}$ uniformly
over the interval $s \sim \lambda$.

The remaining part of the free resolvent appears as a principal value integral,
with the derived bound
\begin{equation*}
\Bignorm[p.v. \int_{\lambda/2}^{2\lambda} 
\Big(\frac{s\phi(\frac{s}{\lambda})}{s+\lambda} F_s\Big) 
 \frac{1}{s-\lambda}\,ds][L^2(V)] \les \lambda^{2\beta - 1}\log \lambda.
\end{equation*} 
The size and smoothness of $F_s$ make it possible to bring the norm inside when
$|s-\lambda| > 1$, and to lessen the singularity via integration by parts
when $|s-\lambda| \leq 1$.
\end{proof}

\begin{corollary} \label{cor:Resdecay}
Suppose $V$ is a compactly supported measure of dimension $d > \frac32$.
Then there exists $\eps > 0$ so that
\begin{equation} \label{eq:Resdecay}
\norm[(V\Res(\lambda^2))^k \mu][\meas] \les \japanese[\lambda]^{-\eps(k-1)}
C(V)^k \norm[\mu][\meas]
\end{equation}
\end{corollary}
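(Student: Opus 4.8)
The plan is to run one step of the iteration in $\meas$, then transfer to a Lebesgue space $L^p(V)$ on which $V\Res(\lambda^2)$ obeys a \emph{decaying} operator bound, iterate there, and finally return to $\meas$. For $k=1$ the claim is nothing more than the Kato‑norm estimate $\norm[V\Res(\lambda^2)\mu][\meas]\le(4\pi)^{-1}\norm[V][\Kato]\norm[\mu][\meas]$, so I will assume $k\ge 2$.

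First I would establish that $V\Res(\lambda^2)$ decays on $L^p(V)$ for $1<p<2$. Identifying $f\in L^p(V)$ with the measure $f\,dV$, the operator $V\Res(\lambda^2)\colon\meas\to\meas$ leaves each space $L^p(V)$ invariant and acts there exactly as the map $f\mapsto\Res(\lambda^2)(Vf)$ of Theorem~\ref{thm:resolvent}; in particular $\norm[V\Res(\lambda^2)][L^2(V)\to L^2(V)]\les\japanese[\lambda]^{-\eps}$ with the $\eps$ furnished by that theorem. On the other hand the pointwise bound $|\Res(\lambda^2)(Vf)(x)|\le(4\pi)^{-1}\int|x-y|^{-1}|f(y)|\,|V|(dy)$, Fubini, and the definition of the Kato norm give $\norm[V\Res(\lambda^2)][L^1(V)\to L^1(V)]\le(4\pi)^{-1}\norm[V][\Kato]$. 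Since $V$ is a finite measure we have $L^2(V)\subset L^1(V)$, and Riesz--Thorin interpolation yields, for $1<p<2$,
\[
\norm[V\Res(\lambda^2)][L^p(V)\to L^p(V)]\les\norm[V][\Kato]^{1-\theta}\,\japanese[\lambda]^{-\eps\theta},\qquad\theta=2-\tfrac2p\in(0,1).
\]

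Next comes the bridge from $\meas$ into $L^p(V)$. Repeating the dyadic estimate in the proof of Proposition~\ref{prop:dimensionKato}, the series $\sum_k 2^{-kp}\min(C_V2^{kd},\norm[V][\meas])$ converges precisely when $0<p<d$, so $\sup_y\int_{\R^3}|x-y|^{-p}\,|V|(dx)<\infty$ for every $1\le p<d$. Minkowski's integral inequality then bounds $\norm[\Res(\lambda^2)\mu][L^p(V)]$ by $\int_{\R^3}\norm[(4\pi|\cdot-y|)^{-1}][L^p(V)]\,|\mu|(dy)\les\norm[\mu][\meas]$, uniformly in $\lambda$; equivalently $V\Res(\lambda^2)\colon\meas\to L^p(V)$ is bounded independently of $\lambda$ whenever $1\le p<d$. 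Because $d>\tfrac32$ I can fix $p$ with $1<p<\min(d,2)$ and set $\theta=2-\tfrac2p$. For $\mu\in\meas$ the bridge puts $V\Res(\lambda^2)\mu$ into $L^p(V)$ with norm $\les\norm[\mu][\meas]$, and applying the operator bound above another $k-1$ times gives $\norm[(V\Res(\lambda^2))^k\mu][L^p(V)]\les(\norm[V][\Kato]^{1-\theta}\japanese[\lambda]^{-\eps\theta})^{k-1}\norm[\mu][\meas]$. Since $(V\Res(\lambda^2))^k\mu$ is absolutely continuous with respect to $|V|$, Hölder's inequality converts this back to $\norm[(V\Res(\lambda^2))^k\mu][\meas]\le\norm[V][\meas]^{1-1/p}\norm[(V\Res(\lambda^2))^k\mu][L^p(V)]$, and collecting every $V$‑dependent factor into a single constant $C(V)$ produces~\eqref{eq:Resdecay}, with the positive exponent $\eps\theta$ playing the role of $\eps$.

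The main obstacle is getting the intermediate space right. Iterating entirely in $\meas$ gains nothing, because that estimate throws away the oscillation of the resolvent kernel; yet $L^2(V)$—where Theorem~\ref{thm:resolvent} provides the decay—cannot be reached from a general finite measure once $d\le 2$, since a point mass $\delta_{y_0}$ with $y_0\in\mathrm{supp}\,V$ already has $\norm[\Res(\lambda^2)\delta_{y_0}][L^2(V)]^2=(4\pi)^{-2}\int|x-y_0|^{-2}\,|V|(dx)=\infty$. Threading the argument through $L^p(V)$ with $1<p<\min(d,2)$—an interval that is nonempty exactly because the dimension exceeds $1$—is what makes everything fit, and verifying the $\meas\to L^p(V)$ bound together with the endpoint interpolation is where the real content sits.
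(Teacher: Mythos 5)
Your argument is correct and follows essentially the same route as the paper: a $\lambda$-uniform ``bridge'' bound $\meas\to L^p(V)$ coming from $\sup_y\int|x-y|^{-p}\,|V|(dx)<\infty$ for $p<d$, an $L^p(V)\to L^p(V)$ decay estimate obtained by Riesz--Thorin interpolation between the trivial $L^1(V)$ Kato bound and the $L^2(V)$ bound of Theorem~\ref{thm:resolvent} (your exponent $\theta=2-\tfrac2p$ is exactly the paper's $\tfrac{2}{p'}$), iterated $k-1$ times, and a return to $\meas$ via the inclusion $L^p(V)\hookrightarrow L^1(V)$. The only difference is that you spell out the endpoint cases and the failure of a direct $\meas\to L^2(V)$ mapping more explicitly than the paper does; the substance is identical.
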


\begin{proof}
The convolution kernel of $\Res(\lambda^2)$ is dominated by $|x-y|^{-1}$,
which belongs to $L^p(V)$ uniformly over $y \in \R^3$ for each $1\le p < d$.
Then $\Res(\lambda^2)$ maps $\meas$ to $L^p(V)$.  Interpolation between
Theorem~\ref{thm:resolvent} and the elementary $L^1(V)$ bounds yields
an operator bound
\begin{equation*}
\norm[\Res(\lambda^2)V f][L^p(V)] \le 
C(V)\japanese[\lambda]^{-\frac{2}{p'}\eps} \norm[f][L^p(V)]
\end{equation*}
which can be applied $(k-1)$ times, followed by an inclusion map
$L^p(V) \hookrightarrow L^1(V)$.
\end{proof}

For our application,
choose $L_0$ large enough so that~\eqref{eq:Resdecay} ensures the
operator norm of $(V\Res(\lambda^2))^2$ is less than $\frac12$ for all
$|\lambda| > L_0$.  Then
\begin{equation*}
\bignorm[(I + \eta(\lambda/L)V\Res(\lambda^2))^{-1}][\meas\to\meas]
< C(1 + \norm[V][\Kato])
\end{equation*}
for all $\lambda, L > L_0$.  We previously showed that
$(I + V\Res(\lambda^2))^{-1}$ is bounded and continuous over the interval
$\lambda \in [-L_0, L_0]$, and the introduction of a cutoff $\eta(\lambda/L)$
has no effect there once $L > L_0$.  The combined bounds show that
\begin{equation*}
\alpha_L := \sup_{\lambda \in \R}
\norm[({\mathbf 1} + \hat{T}_L(\lambda))^{-1}][\meas\to\meas]
\le \alpha < \infty
\end{equation*}
uniformly for $L > L_0$.

The choice of $N$ is governed by Corollary~\ref{cor:Resdecay}.
Set $N$ to be the first integer large enough so that $(N-1)\eps > 2$.
This number depends only on the dimension of $V$ without regard to
any measures of its size.

The next parameter to consider is $R_L$, which governs the inequality
\begin{equation*}
\norm[\chi_{|\rho|\ge R_L} T_L][\W_\meas] \leq (K\alpha)^{-1}
\end{equation*}
This is controlled by direct examination of the integral kernel of $T_L$
in~\eqref{eq:K_L}.  More specifically,
\begin{equation} \label{eq:R_L}
\begin{aligned}
\norm[\chi_{|\rho|\ge R} T_L][\W_\meas] &= 
\sup_{y\in\R^3} \int_{\R^3}\int_{|\rho| > R} 
L\frac{|\check{\eta}(L(\rho+|x-y|))|}{4\pi|x-y|}\, d\rho |V|(dx) \\
&\les \sup_{y\in\R^3} \int_{|x-y| > R-1} \frac{|V|(dx)}{|x-y|}
+ L^{-1} \int_{|x-y| < R-1} \frac{|V|(dx)}{|x-y|}
\end{aligned}
\end{equation}
The first integral is bounded by $R^{-1}\norm[V][\meas]$ for any $R > 2$.
Meanwhile the second integal
is less than $L^{-1}\norm[V][\Kato]$ so for all $L \gtr \alpha\norm[V][\Kato]$
it suffices to choose $R$ to be the larger of $2$ and $C\alpha\norm[V][\meas]$.

It will be convenient in the next step to have additional control of 
$\norm[\chi_{|\rho| \ge R} T_L][\W_\meas]$.  Using the same construction, one can
find $R$ so that
\begin{equation*}
\norm[\chi_{|\rho|\ge R} T_L][\W_\meas] \leq 
  (KN)^{-1}(C \norm[V][\Kato])^{-(N-1)}
\end{equation*}
for any $L \gtr  N (C \norm[V][\Kato])^N$.

The purpose of $N$ is to give $\norm[(\hat{T}_L)^N(\lambda)][]$ sufficiently
rapid decay (in $\lambda$) so that Fourier inversion forces
$(T_L)^N(\rho)$ to be a continuously differentiable operator-valued
function, with derivative smaller in operator norm  than
$C(V)^N$.  For each fixed $0< \delta < 1$ one can estimate the size
of the difference $(T_L)^N(\rho) - (T_L)^N(\rho-\delta)$ using the mean
value theorem with the result
\begin{align*}
\norm[(T_L)^N(\,\cdot\,) - (T_L)^N(\,\cdot\,-\delta)][\W_\meas]
&\leq \int_{|\rho|\leq NR + 1} 
 \norm[(T_L)^N(\rho)-(T_L)^N(\rho-\delta)][{\mathcal B}(\meas)]\, d\rho \\
&\hskip 1.5in + 2 \norm[\chi_{|\rho| \geq NR}(T_L)^N][\W_\meas]\\
& \leq \delta (C(V))^N NR + 2N\norm[\chi_{|\rho|\ge R} T_L][\W_\meas]
 \norm[T_L][\W_\meas]^{N-1}
\end{align*}
Based on the prior estimates the choice of $\delta < (NRK)^{-1}C(V)^{-N}$
will cause
\begin{equation*}
 \norm[T_L(\,\cdot\,) - T_L(\,\cdot\,-\delta)][\W_\meas]
 \le 3/K
\end{equation*}
for all $L > \max(L_0, \alpha\norm[V][\Kato], N (C \norm[V][\Kato])^N)$.
This concludes the proof of Theorem~\ref{thm:main} in the case where
$V$ is a compactly supported measure of dimension $d > \frac32$.

The extension of dispersive estimates to potentials $V$ which are the Kato-norm
limit of compactly supported $d$-dimensional measures is more or less routine.
As before the norm of $T_L$ within $\W_\meas$ has a uniform bound in terms
of $\norm[V][\Kato]$ from~\eqref{eq:T_LinW_M}.  No approximation properties
are required in this step.

The arguments used to establish a finite value for $\alpha$ require more individual
attention.  Proposition~\ref{prop:dimensionKato} guarantees that $V$
satisfies~\eqref{eq:localKato} so there are no complications regarding the
self-adjointness of $-\Delta + V$.  Elementary limiting arguments can be applied
to~\eqref{eq:continuity} to show that $V\Res(\lambda^2)$ has continuous
(but not necessarily Lipschitz) dependence on $\lambda$, and to
Lemma~\ref{lem:compactness} to show that each operator $V\Res(\lambda^2)$
is compact.   Under the assumption that $-\Delta + V$ has no resonance at zero
and no threshold or embedded eigenvalues, there is a uniform norm bound on
$(I + V\Res(\lambda^2))^{-1}$ over any finite interval
$\lambda \in [-L_0, L_0]$.  Note that embedded resonances are still forbidden by
Lemma~\ref{lem:Agmon}.

For the low and intermediate energy estimates, it suffices for $V$ to be the limit
(in $\Kato$) of a sequence of compactly supported measures
satisfying~\eqref{eq:localKato}.  Following Corollary~\ref{cor:Resdecay}, if
each $\mu_j$ in the approximating sequence has dimension $d_j > \frac32$
then
\begin{equation*}
\lim_{\lambda \to \pm \infty} \norm[(V\Res(\lambda^2))^2][\meas \to \meas] = 0.
\end{equation*}
Then $L_0$ can be set large enough for the Neumann series of
$(I + \eta(\lambda/L)V\Res(\lambda^2))^{-1}$ to converge uniformly over
the infinite intervals $|\lambda| > L_0$.  Combined with the low-energy
results this gives a finite bound for $\alpha_L$ once $L > L_0$.

The process for choosing $N$ requires that $d = \inf d_j > \frac32$, so that
there is a uniform value of $\eps>0$ in Corollary~\ref{cor:Resdecay}. Then one can
again declare $N$ to be the smallest integer satisfying $(N-1)\eps > 2$.

The selection criteria for $R_L$ are little changed.  So long as $V$ is the
Kato norm-limit of compactly supported potentials it is permissible to 
estimate
\begin{equation*}
\lim_{R \to \infty} \sup_{y\in\R^3} \int_{|x-y|>R-1} \frac{|V|(dx)}{|x-y|} = 0
\end{equation*}
inside of~\eqref{eq:R_L}, replacing the explicit decay rate of
$R^{-1}\norm[V][\meas]$.  Thus for some $R < \infty$ this integral term will be
smaller than $(K\alpha)^{-1}$.  The lower bounds placed on $L$ are independent
of the support of $V$ and do not need further adjustment.

To find $\delta$, first choose an approximating measure $\mu$ of dimension
$d > \frac32$ such that
$\norm[V-\mu][\Kato] \les (KN)^{-1}\norm[V][\Kato]^{-(N-1)}$.  With $\tilde{T}_L$
denoting the element of $\W_\meas$ generated by the potential $\mu$ and cutoff
$\eta(\lambda/L)$, it follows from~\eqref{eq:T_LinW_M} that
\begin{equation*}
\norm[T_L - \tilde{T}_L][\W_\meas] < (KN)^{-1}\norm[V][\Kato]^{-(N-1)}
\end{equation*}
uniformly for all $L>0$.  Then the norm difference of their respective
$N^{\rm th}$ powers is controlled by $ K^{-1}$.  Based on the properties of
$\mu$, one can choose $\delta > 0$ so that 
$\norm[(\tilde{T}_L)^N(\,\cdot\,) - 
  (\tilde{T}_L)^N(\,\cdot\,-\delta)][\W_\meas] \leq 3/K$
for all sufficiently large $L$.
By the triangle inequality a similar translation bound holds for $T_L$ as well, with
\begin{equation*}
\norm[(T_L)^N(\,\cdot\,) - (T_L)^N(\,\cdot\,-\delta)][\W_\meas] \leq 5/K.
\end{equation*}

All five of the parameters related to the construction of $(\1 + T_L)^{-1}$
(namely $\norm[T_L][\Kato]$, $\alpha_L $, $R$, $N$, and $\delta$) have an
eventual uniform bound as $L \to \infty$. Therefore the operator inverses
$\norm[(\1 + T_L)^{-1}][\overline{W}_\meas]$
are also uniformly bounded, and their weak limit $(\1 + T)^{-1}$ is a
finite measure on $\R^{1+3}$ as desired.

\bibliographystyle{abbrv}
\bibliography{MasterList}

\begin{thebibliography}{10}

\bibitem{BeGo11}
M.~Beceanu and M.~Goldberg.
\newblock Schr\"odinger dispersive estimates for a scaling-critical class of
  potentials.
\newblock {\em Comm.\ Math.\ Phys.}
\newblock To appear. (arXiv:1009.5285).

\bibitem{CaCuVo09}
F.~Cardoso, C.~Cuevas, and G.~Vodev.
\newblock Dispersive estimates for the {S}chr\"odinger equation in dimension
  four and five.
\newblock {\em Asymptot.\ Anal.}, (3):125--146, 2009.

\bibitem{DaPiTe06}
P.~D'Ancona, V.~Pierfelice, and A.~Teta.
\newblock Dispersive estimate for the {S}chr\"odinger equation with point
  interactions.

\bibitem{Er05}
M.~B. Erdogan.
\newblock A bilinear {F}ourier extension theorem and applications to the
  distance set problem.
\newblock {\em Intl.\ Math.\ Res.\ Not.}, 2005(23):1411--1425, 2005.

\bibitem{ErGr10}
M.~B. Erdogan and W.~R. Green.
\newblock Dispersive estimates for the {S}chr\"odinger equation for
  {$C^{\frac{n-3}{2}}$} potentials in odd dimensions.
\newblock {\em Intl.\ Math.\ Res.\ Not.}, 2010(13):2532--2565, 2010.

\bibitem{ErSc04}
M.~B. Erdogan and W.~Schlag.
\newblock Dispersive estimates for {S}chr\"odinger operators in the presence of
  a resonance and/or an eigenvalue at zero energy in dimension three: {I}.
\newblock {\em Dyn.\ Partial Differ.\ Equ.}, 1(4):359--379, 2004.

\bibitem{Go06b}
M.~Goldberg.
\newblock Dispersive bounds for the three-dimensional {S}chr\"odinger equation
  with almost critical potentials.
\newblock {\em Geom.\ and Funct.\ Anal.}, 16(3):517--536, 2006.

\bibitem{GoSc04a}
M.~Goldberg and W.~Schlag.
\newblock Dispersive estimates for the {S}chr\"odinger operator in dimensions
  one and three.
\newblock {\em Comm.\ Math.\ Phys.}, 251(1):157--178, 2004.

\bibitem{GoSc04b}
M.~Goldberg and W.~Schlag.
\newblock A limiting absorption principle for the three-dimensional
  {S}chr\"odinger equation with {$L^p$} potentials.
\newblock {\em Intl.\ Math.\ Res.\ Not.}, 2004:75:4049--4071, 2004.

\bibitem{JeKa79}
A.~Jensen and T.~Kato.
\newblock Spectral properties of {S}chr\"odinger operators and time-decay of
  the wave functions.
\newblock {\em Duke Math. J.}, 46(3):583--611, 1979.

\bibitem{JoSoSo91}
J.-L. Journ\'e, A.~Soffer, and C.~Sogge.
\newblock Decay estimates for {S}chr\"odinger operators.
\newblock {\em Comm.\ Pure Appl.\ Math.}, 44(5):573--604, 1991.

\bibitem{Ka72}
T.~Kato.
\newblock {S}chr\"odinger operators with singular potentials.
\newblock {\em Israel J.\ Math.}, 13:135--148, 1972.

\bibitem{KoTa06}
H.~Koch and D.~Tataru.
\newblock Carleman estimates and absence of embedded eigenvalues.
\newblock {\em Comm.\ Math.\ Phys.}, 267(2):419--449, 2006.

\bibitem{Ra78}
J.~Rauch.
\newblock Local decay of scattering solutions to {S}chr\"odinger's equation.
\newblock {\em Comm.\ Math.\ Phys.}, 61(2):149--168, 1978.

\bibitem{ReSi2}
M.~Reed and B.~Simon.
\newblock {\em Methods of Modern Mathematical Physics. II. Fourier Analysis,
  Self Adjointness}.
\newblock Academic Press [Harcourt Brace Jovanovich, Publishers], New
  York--London, 1975.

\bibitem{ReSi4}
M.~Reed and B.~Simon.
\newblock {\em Methods of Modern Mathematical Physics. IV. Analysis of
  Operators}.
\newblock Academic Press [Harcourt Brace Jovanovich, Publishers], New
  York--London, 1978.

\bibitem{RoSc04}
I.~Rodnianski and W.~Schlag.
\newblock Time decay for solutions of {S}chr\"odinger equations with rough and
  time-dependent potentials.
\newblock {\em Invent.\ Math.}, 155(3):451--513, 2004.

\bibitem{Sc71}
M.~Schechter.
\newblock {\em Spectra of partial differential operators}.
\newblock North-Holland, Amsterdam.

\bibitem{Sc05a}
W.~Schlag.
\newblock Dispersive estimates for {S}chr\"odinger operators in dimension two.
\newblock {\em Comm.\ Math.\ Phys.}, 257(1):87--117, 2005.

\bibitem{Si82}
B.~Simon.
\newblock Schr\"odinger semigroups.
\newblock {\em Bull.\ Amer.\ Math.\ Soc.}, 7(3):447--526, 1982.

\bibitem{Ya95}
K.~Yajima.
\newblock {The $W^{k,p}$-continuity of wave operators for {S}chr\"odinger
  operators}.
\newblock {\em J. Math.\ Soc.\ Japan}, 47(3):551--581, 1995.

\bibitem{Ya05}
K.~Yajima.
\newblock Dispersive estimate for {S}chr\"odinger equations with threshold
  resonance and eigenvalue.
\newblock {\em Comm.\ Math.\ Phys.}, 259(2):475--509, 2005.

\end{thebibliography}

\end{document}